\documentclass[reqno,A4paper]{amsart}

\usepackage{amsmath}
\usepackage{amssymb}
\usepackage{amsthm}
\usepackage{enumerate}
\usepackage{mathrsfs} 
\usepackage{eqlist}
\usepackage{array}

\setlength{\textwidth}{150mm}
\setlength{\textheight}{206mm}
\setlength{\oddsidemargin}{5mm}
\setlength{\evensidemargin}{5mm}


\theoremstyle{plain}
\newtheorem{theorem}{Theorem}[section]
\newtheorem{prop}[theorem]{Proposition}

\newtheorem{corol}{Corollary}[theorem]



\theoremstyle{definition}
\newtheorem{definition}{Definition}
\newtheorem{remark}{\textup{Remark}} 
\newtheorem{example}{\textit{Example}} 





\usepackage{xcolor}
\usepackage{graphicx}
\usepackage{color}

\def\r{\mathbb R}
  
\def\C{\mathbb C}
\def\d{\mathbb D}

\numberwithin{equation}{section}



\begin{document}

\title[Differential geometry of curves in dual space]%
{Differential geometry of curves in dual space}
\author[Rafael L\'opez]%
{Rafael L\''opez*}

\newcommand{\acr}{\newline\indent}

\address{\llap{*\,}Departamento de Geometr\'{\i}a y Topolog\'{\i}a\acr
 Universidad de Granada 18071 Granada, SPAIN}
\email{rcamino@ugr.es} 



\subjclass[2010]{Primary 53A10; Secondary  53C21, 53C42} 
\keywords{dual space, Frenet frame, curvature, torsion}

\begin{abstract}We introduce the Frenet theory of curves in dual space $\d^3$. After defining the curvature and the torsion of a curve, we classify all curves in dual plane with constant curvature. We also establish the fundamental theorem of existence in the theory of dual curves, proving that  there is a dual curve with prescribed curvature and torsion. Finally we classify all dual curves with constant curvature and torsion.
\end{abstract}

\maketitle

\section{Introduction and the problem of reparametrization} 

The study of curves   in Riemannian spaces is one of the main   topics in classical differential geometry. The geometry of these curves is studied thanks to its Frenet frame and the notion of curvature. In $3$-dimensional Euclidean space $\r^3$, the curvature and the torsion of a curve determine, up to rigid motions of $\r^3$, the shape of the curve. In this paper we extend this theory to curves in dual space $\d^3$.

 Dual numbers were  introduced  by William Clifford in 1873 \cite{cl}. The set of dual numbers is the algebra $\d=\{x+\varepsilon y\colon x,y\in\r\}$ where $\varepsilon^2=0$.    Eduard Study used   dual numbers to represent lines in $\r^3$ as follows. Let $\d^3=\r^3\times\varepsilon\r^3$ be the space of dual vectors. Let $L=p+\langle v\rangle$ denote a line of $\r^3$, where $p\in\r^3$ is a point of $L$ and $v\in\r^3$ is the direction vector of $L$, which we assume unitary, $|v|=1$. We associate  to $L$ the dual vector $v+\varepsilon v^*\in \d^3$, where $v^*=p\times v$ is the moment of $L$. Notice that $v^*$ does not depend on the choice of the point $p$ of $L$. Moreover,   $v$ and $v^*$ are orthogonal vectors \cite{st}. Also Study represented the relative   displacement and orientation of two lines in space by dual numbers. So, if $\theta$ is the angle   between their direction vectors  and $d$ is the shortest distance between them, both numbers   are formally written as the dual number $\theta+\varepsilon d$.   Thanks to this type of representations of lines in $\r^3$,  some properties of the kinematics of curves can be studied in terms of dual numbers. This has attracted the interest in CADG, robotics and analysis of the mechanic of mechanisms: see for example, \cite{an, be,di,du,fi,ha}.
 
In this paper, we study   smooth curves in $\d^3$ from the viewpoint of differential geometry. For a smooth curve, we mean a differentiable map $\gamma\colon I\subset\r\to\d^3$. If we write $\gamma$ in its real and dual part, then $\gamma(t)=\alpha(t)+\varepsilon \beta(t)$, where $\alpha,\beta\colon I\to\r^3$ are smooth curves and the derivative of $\gamma$ is $\gamma'(t)=\alpha'(t)+\varepsilon\beta'(t)$.  The metric in $\d^3$ is defined by extending    the Euclidean metric $\langle,\rangle $ of $\r^3$ by bilinearity and using $\varepsilon ^2=0$, 
$$\langle a_1+\varepsilon b_1,a_2+\varepsilon b_2\rangle =\langle a_1,b_1\rangle+\varepsilon (\langle a_1,b_2\rangle+\langle b_1,a_2\rangle),$$
where $a_i,b_i\in\r^3$, $i=1,2$.
The purpose of this paper is to give a theory of Frenet frame which it allows a definition of curvature and torsion for a curve in $\d^3$. Recently, there is a certain literature in the study of curves in $\d^3$ from this viewpoint. However, and in opinion of the author, some of  these studies   suffer   some confusion about what variable is being considered in the curve and we now explain in detail. 

The problem finds if one   considers curves in  dual space $\d^3$ of real variable  or curves in $\d^3$ whose variable is a dual number, that is, of dual parameter.  Both objects are different. In the first case we are in the situation of an immersion of an interval $I$ of $\r$ in $\d^3$.  This corresponds with a pair of curves of real variable in $\r^3$.   In the second case, since a dual parameter corresponds to a pair of real numbers, we have an immersion of a domain $\Omega$ of $\r^2$  in $\d^3$. In this case, a curve of dual parameter represents a pair of surfaces in $\r^3$.

In consequence, the concept of differentiability  used in each case is different. In the first situation,  differentiability is the usual notion of a function of real variable which   extends in a natural way  to the real and dual parts of the curve.  However, if we consider curves with dual parameter, the differentiability is completely different.  
The notion of differentiability for functions defined in domains of $\d$ follows the same steps that holomorphic functions and similar  Cauchy-Riemann equations appear when this theory is developed \cite{ve}.   

We can compare the last situation of curves with dual parameter if we replace  the set of dual numbers by the set of complex numbers $\C$, $z=x+iy\in \C$, $i^2=-1$. Differentiability of functions  between a domain of $\C$ and $\C^3$ means  that the functions are holomorphic. The study of holomorphic curves in $\C^3$ is now attracting mathematicians since the  papers of Calabi and Lawson \cite{ca1,ca2,la}. We also point out  a remarkable paper of Bryant   where he studied  holomorphic curves in  the $6$-sphere ${\mathbb S}^6$ by means of a type of holomorphic Frenet frame defining a curvature and torsion of holomorphic curves \cite{br}. Notice that curves in $\d^3$ with dual parameter are not the object of this paper.

Coming back again with curves in dual space $\d^3$, surely the source of misunderstanding finds in \cite{ve}, when the author   reparametrizes a curve in $\d^3$ of real variable in order to have unitary speed vectors. More precisely, following   \cite[p. 147]{ve},   the problem arises when a curve in $\d^3$ of real variable is not parametrized by arc-length. Let $\gamma(s)=\alpha(s)+\varepsilon \beta(s)$, $s\in I\subset\r$ a curve in $\d^3$ where $|\alpha'(s)|=1$ for all $s\in I$. However $|\gamma'(s)|=1+ \varepsilon\langle\alpha'(s),\beta'(s)\rangle$. By repeating here the   computations done in \cite{ve}, the author defines the so-called  `dual arc-length parameter' $\tilde{s}$ by 
\begin{equation}\label{pr}
\tilde{s}=\int_0^s|\gamma'(\sigma)|\, d\sigma=\int_0^s(1+\varepsilon \langle\alpha'(\sigma),\beta'(\sigma)\rangle\, d\sigma=s+\varepsilon \int_0^s\langle\alpha'(\sigma),\beta'(\sigma)\rangle\, d\sigma.
\end{equation}
But this identity does not define a dual variable because $\tilde{s}$ is a function of $s$, $\tilde{s}=\tilde{s}(s)$. In the simple case that $\alpha'$ and $\beta'$ are orthogonal, we  have $\tilde{s}=s+\varepsilon$ and $\tilde{s}(s)$ only describes  the line $y=0$ of $\d$. Other example shows the same problem. If, for instance,  $\gamma(s)=(\cos s,\sin s,0)+\varepsilon (s,0,s)$, then $\tilde{s}=s+\varepsilon \sin s$, which it does not cover $\d$ and thus  it is not a  dual variable.   This confusion has been followed by  many authors   attempting a Frenet  theory for functions that  initially   were of a real variable and next, after \eqref{pr}, these functions  were   transformed of dual variable: see for example, \cite{cg,lcj,lp,yac1,yac2}.

In order to establish a theory of Frenet frame for curves in dual space $\d^3$   the problem finds in those dual curves that are   not parametrized by arc-length.   The issue is that not  every dual curve  $\gamma(t)=\alpha(t)+\varepsilon\beta(t)$ can be reparametrized by arc-length and extra   conditions to $\alpha$ and $\beta$ must be assumed. This is explained at the beginning of Sect. \ref{sec-2}. The conditions of a dual curve to have reparametrizations by arc-length are the following:
 \begin{enumerate}
\item The real part $\alpha$ of the curve must be a regular curve.
\item The velocities vectors $\alpha'(t)$ and $\beta'(t)$ must be orthogonal for all $t\in I$.
\end{enumerate}

Although the second condition may be restrictive, we have shown that orthogonality is natural in the context of kinematics for the description of lines because the director vector $v$ of a line and its moment $v^*$ are orthogonal vectors.  Therefore, the theory of Frenet curves in $\d^3$ will be developed under both conditions. 

The organization in this paper is the following. In Sect. \ref{sec-2}, we show the problem of reparametrizations by arc-length and we give how to solve up. Once the objects of study have been well established, we define the Frenet frame of a curve in dual space $\d^3$. This allows to give the definition of curvature and torsion. With both concepts, it is natural to propose similar questions motivated by the classical theory of curves in Euclidean space. In Sect. \ref{sec-3}, we classify the curves with constant curvature in the dual plane $\d^2$, proving that the real part is a circle and the dual part is a curve of spiral type (Thm. \ref{t2}).   In Sect. \ref{sec-4}, we establish a theorem of existence of dual curves when the curvature and the torsion are prescribed.  More precisely, we prove that given two differentiable functions $k_0$ and $\tau_0$ defined in some interval of $\r$, there exists a dual curve of curvature and torsion $k_0$ and $\tau_0$, respectively (Thm. \ref{t-41}). We show some explicit examples in Sect. \ref{sec-5} as for examples, curves with zero torsion. Uniqueness cannot be obtained as in Euclidean space, that is, non-congruent curves in $\d^3$ in the Euclidean sense, may have the same curvature and torsion. Finally in Sect. \ref{sec-6}, we obtain explicitly the parametrizations of  all dual curves with constant curvature and constant torsion (Thm. \ref{t51}).

A last remark.  Along this paper we will call {\it dual curves} to refer curves of real variable in the dual space $\d^3$. A more appropriate terminology may be a {\it curve in dual space $\d^3$}   such as it appears in the title of this paper. A dual curve would be a term employed for curves with dual parameter. However, it is common  in the literature  the use of  terms as spherical curves, hyperbolic curves or  Lorentzian curves to indicate curves of real variable whose image is include in spheres, hyperbolic spaces or Lorentzian spaces, respectively.  For this reason we also employ the terminology of dual curve as curve in dual space.
 
\section{The Frenet frame of dual curves}\label{sec-2}

In this section, we assign a Frenet frame for a curve in dual space and, consequently, we will define the curvature and the torsion of a curve. For basic properties of dual numbers, we refer to the reader to \cite{ve}. This first part of this section holds for curves of real parameter on $n$-dimensional space $\d^n$. 

We  show the problem of reparametrization by arc-length of a dual curve. Let $\gamma\colon I\subset \r\to\d^n$  be a differentiable map,   $\gamma=\gamma(t)$.  Let us write $\gamma=\alpha+\varepsilon\beta$, where $\alpha,\beta\colon I\to\r^3$ are the real and dual parts of $\gamma$, respectively. Then  
$\gamma'(t)=\alpha'(t)+\varepsilon\beta'(t)$. For convenience, we drop the variable of $\gamma$ if it is understood. We will assume that $\gamma$ is a regular curve, that is, $\gamma'(t)\not=0$ for all $t\in I$.   In order to define the Frenet frame, we need that the curve is parametrized by arc-length. We say that $\gamma$ is parametrized by arc-length if $\langle\gamma'(t),\gamma'(t)\rangle =1$ for all $t\in I$. Since 
$$
\langle \gamma'(t),\gamma'(t)\rangle=\langle\alpha'(t),\alpha'(t)\rangle+2\varepsilon\langle\alpha'(t),\beta'(t)\rangle,
$$
this implies
\begin{equation}\label{eq1}
\langle \alpha'(t),\alpha'(t)\rangle=1,\quad  \langle\alpha'(t),\beta'(t)\rangle=0,\quad t\in I.
\end{equation}
Therefore     necessary conditions  for a curve $\gamma$ to be parametrized by arc-length is that $\alpha$ is a regular curve and that the velocities of the real and dual parts must be orthogonal.  If the curve is not parametrized by arc-length, then   we reparametrize $\gamma$ in order to get it.  As usually,   a reparametrization of $\gamma$ is a change of the parameter $t$ by $s$ by means of a  diffeomorphism  $\phi\colon J\subset \r \to I$  between intervals of $\r$, $t=\phi(s)$. A reparametrization of $\gamma$ is  the curve   $\tilde{\gamma}(s)=\gamma(\phi(s))$, $s\in J$. If we impose the condition that   $|\tilde{\gamma}'(s)|=1$ for all $s\in J$,  the chain rule gives 
\begin{equation}\label{eq2}
\tilde{\gamma}'(s)=\phi'(s)\gamma'(\phi(s))=\phi'(s)\left(\alpha'(\phi(s))+\varepsilon\beta'(\phi(s))\right).
\end{equation}
Using that $\phi'\not=0$, the condition $\langle \tilde{\gamma}'(s),\tilde{\gamma}'(s)\rangle=1$ is equivalent to 
$$   \phi'(s)^2\langle\alpha'(\phi(s)),\alpha'(\phi(s))\rangle=1,\quad \langle \alpha'(\phi(s)),\beta'(\phi(s))\rangle=0.$$
As a consequence, we see that a reparametrization of $\gamma$ allows to get unitary tangent vector for the real part of $\gamma$ if $\alpha'\not=0$, that is, if $\alpha$ is regular. However, the orthogonality condition   \eqref{eq1} remains invariant and it must be assumed from the beginning.  The study of dual curves from a differential geometry viewpoint makes necessary both conditions. We summarize the above calculations in the following result.
 
 \begin{prop}\label{pr1}
 Let $\gamma\colon I\to\d^n$ be a  curve, $\gamma(t)=\alpha(t)+\varepsilon\beta(t)$. If $\alpha$ is a regular curve and $\langle\alpha'(t),\beta'(t)\rangle=0$ for all $t\in I$, then there is a reparametrization of $\gamma$ by arc-length. 
\end{prop}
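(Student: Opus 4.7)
The plan is to mimic the classical arc-length reparametrization from $\mathbb{R}^3$, using only the real part $\alpha$ to define the change of parameter, and then verify that the dual orthogonality condition is exactly what is needed to make $|\tilde\gamma'(s)|=1$ as a dual number.

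First I would fix a base point $t_0\in I$ and define the purely real function
\[
s(t)=\int_{t_0}^{t}|\alpha'(u)|\,\dd u.
\]
Since $\alpha$ is regular, $s'(t)=|\alpha'(t)|>0$, so $s\colon I\to J:=s(I)$ is a smooth diffeomorphism. Let $\phi=s^{-1}\colon J\to I$ be its inverse; in particular $\phi'(s)=1/|\alpha'(\phi(s))|$. Define the reparametrization $\tilde\gamma(s)=\gamma(\phi(s))$.

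Next I would verify the arc-length condition. By \eqref{eq2},
\[
\tilde\gamma'(s)=\phi'(s)\bigl(\alpha'(\phi(s))+\varepsilon\beta'(\phi(s))\bigr),
\]
and using the bilinear extension of the Euclidean metric to $\d^n$ together with $\varepsilon^2=0$,
\[
\langle\tilde\gamma'(s),\tilde\gamma'(s)\rangle
=\phi'(s)^2\bigl(\langle\alpha'(\phi(s)),\alpha'(\phi(s))\rangle+2\varepsilon\langle\alpha'(\phi(s)),\beta'(\phi(s))\rangle\bigr).
\]
The orthogonality hypothesis $\langle\alpha'(t),\beta'(t)\rangle=0$ kills the $\varepsilon$-term, leaving $\phi'(s)^2|\alpha'(\phi(s))|^2=1$ by the choice of $\phi$. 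Hence $\tilde\gamma$ is parametrized by arc-length.

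There is essentially no obstacle here: the proposition is a clean observation, and the only subtle point is conceptual rather than technical, namely that the $\varepsilon$-component of $\langle\gamma',\gamma'\rangle$ is not something a change of parameter can ever absorb (since reparametrizing multiplies the whole inner product by $\phi'(s)^2$, a real scalar). This is exactly why the orthogonality $\langle\alpha',\beta'\rangle=0$ must be present from the start, and it is also what highlights the contrast with the incorrect ``dual arc-length'' construction \eqref{pr} discussed in the introduction.
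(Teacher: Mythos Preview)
Your proof is correct and follows essentially the same approach as the paper: the paper's argument, given in the discussion immediately preceding the proposition, also reparametrizes using the arc-length of the real part $\alpha$ and observes from \eqref{eq2} that $\langle\tilde\gamma'(s),\tilde\gamma'(s)\rangle=1$ splits into the real condition $\phi'(s)^2|\alpha'(\phi(s))|^2=1$ (solvable since $\alpha$ is regular) and the dual condition $\langle\alpha',\beta'\rangle=0$ (assumed as hypothesis). Your write-up is slightly more explicit in constructing $\phi$ as the inverse of $s(t)=\int_{t_0}^t|\alpha'(u)|\,\dd u$, but the content is identical.
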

\begin{example} The following     curves cannot be parametrized by arc-length.
\begin{enumerate}
\item $\gamma(t)=(0,0,1)+\varepsilon (t,0,0)$. The real part is not a regular curve even though the dual part is a regular curve.
\item $\gamma(t)=(t,0,0)+\varepsilon (t,t,0)$. The vectors $\alpha'(t)=(1,0,0)$ and $\beta'(t)=(1,1,0)$ are not orthogonal.
\end{enumerate}
 
\end{example}

\begin{example} Let $\gamma(t)=(\cos t^2,\sin t^2,0)+\varepsilon (0,0,\cos t)$, $t\in I=(0,\infty)$. We have 
$\gamma'(t)=2t(-\sin t^2,\cos t^2,0)-\varepsilon (0,0,\sin t)$. The real part $\alpha$ is regular because $|\alpha'(t)|=2t$. Moreover,   $\langle\alpha',\beta'\rangle=0$. Thus $\gamma$ satisfies the condition of Prop. \ref{pr1}.  We reparametrize $\alpha$ by arc-length obtaining $\phi(s)=\sqrt{s}$.  Then the desired reparametrization of $\gamma$ is 
$$\tilde{\gamma}(s)=\gamma(\sqrt{s})=(\cos s,\sin s,0)+\varepsilon (0,0,\cos (\sqrt{s})).$$
\end{example}

Proposition \ref{pr1} says that if a curve $\gamma$   is not parametrized by arc-length, then reparametrizing only its real part, we have    a reparametrization by arc-length of the dual curve $\gamma$. This reparametrization of $\gamma$ also reparametrizes the dual part $\beta$.  However it is not possible, in general, that  the curve $\beta$ is parametrized by arc-length, that is, there is not  a common parameter $s$ which it is the arc-length parameter for   both curves $\alpha$ and $\beta$. Only  in this case, we have the following definition. 

\begin{definition}
 A     curve $\gamma$ in the dual space $\d^n$ parametrized by arc-length is said to be normalized if  $\beta$ is parametrized by arc-length.
 \end{definition}

We begin with the Frenet theory for dual curves. Let $\gamma$ be a curve in $\d^n$ parametrized by arc-length.  Define the {\it tangent vector} of $\gamma$ as 
$$T(s)=\gamma'(s)=\alpha'(s)+\varepsilon\beta'(s).$$
As usually, the unit normal vector of $\gamma$ is defined by differentiating    $\langle T(s),T(s)\rangle=1$, obtaining   $\langle \gamma''(s),T(s)\rangle=0$. As in the Euclidean case, the normal vector is defined once we  discard the case  $\gamma''(s)=0$ for all $s\in I$.

\begin{prop} Let $\gamma$ be a curve in $\d^n$ parametrized by arc length. Then  $\gamma''(s)=0$ for all $s\in I$ if and only if $\gamma$ is a straight line of $\d^n$. Moreover, the straight lines $\alpha$ and $\beta$ are orthogonal.
\end{prop}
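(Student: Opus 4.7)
My plan is to reduce the statement to the corresponding Euclidean fact applied separately to the real and dual parts. The key observation is that, since $\gamma(s)=\alpha(s)+\varepsilon\beta(s)$, differentiation decomposes as $\gamma''(s)=\alpha''(s)+\varepsilon\beta''(s)$; hence the vanishing of $\gamma''$ is equivalent to the simultaneous vanishing of $\alpha''$ and $\beta''$ in $\r^n$.

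First I would prove the direct implication. Assuming $\gamma''\equiv 0$, the above decomposition yields $\alpha''\equiv 0$ and $\beta''\equiv 0$, so integration gives
\[
\alpha(s)=p+s\,v,\qquad \beta(s)=q+s\,w,
\]
for constants $p,q,v,w\in\r^n$. Therefore
\[
\gamma(s)=(p+\varepsilon q)+s\,(v+\varepsilon w),
\]
which is precisely an affine map $\r\to\d^n$, i.e., a straight line in $\d^n$. The converse is immediate: any such affine parametrization has vanishing second derivative.

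For the final claim about orthogonality of $\alpha$ and $\beta$, I would invoke the arc-length hypothesis. By \eqref{eq1}, we have $\langle\alpha'(s),\beta'(s)\rangle=0$ for all $s\in I$, which in the present setting becomes $\langle v,w\rangle=0$. Since $v$ and $w$ are the direction vectors of the lines $\alpha$ and $\beta$ respectively, these lines are orthogonal as affine subspaces of $\r^n$.

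The argument is essentially bookkeeping on the real and dual parts, so I do not anticipate a real obstacle; the only point that merits care is handling the degenerate case $v=0$ or $w=0$ (when one of the components collapses to a point), but this does not interfere with the statement as formulated since orthogonality holds trivially when one of the direction vectors vanishes.
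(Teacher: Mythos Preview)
Your proof is correct and follows essentially the same approach as the paper: decompose $\gamma''=\alpha''+\varepsilon\beta''$, reduce to $\alpha''=\beta''=0$ in $\r^n$, and read off that both parts are affine, with the orthogonality of $\alpha'$ and $\beta'$ coming from the arc-length condition \eqref{eq1}. Your write-up is in fact more detailed than the paper's, which simply notes the decomposition and declares the orthogonality ``immediate''; your remark on the degenerate case $w=0$ is a nice addition (note that $v\neq 0$ is automatic since $|\alpha'|=1$).
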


\begin{proof}
Since $\gamma''(s)=\alpha''(s)+\varepsilon\beta''(s)$,  the condition $\gamma''(s)=0$ is equivalent to $\alpha''(s)=0$ and $\beta''(s)=0$ everywhere, that is,   $\alpha$ and $\beta$ are straight lines in $\r^3$, hence the result. The last statement is immediate.
\end{proof}

From now, we will consider curves in $3$-dimensional dual space $\d^3$. Let $\gamma$ be a dual curve in $\d^3$ such that  $|\gamma''(s)|\not=0$ everywhere.   Define the {\it normal vector} of $\gamma$ by 
$$N(s)=\frac{1}{|\gamma''(s)|}\gamma''(s).$$

\begin{definition} If $\gamma$ is a dual curve in $\d^3$ parametrized by arc-length, the {\it   curvature} of $\gamma$ is defined by 
$$\kappa(s)=|\gamma''(s)|.$$
\end{definition}
  Finally we define the binormal vector and the torsion.

\begin{definition}  If $\gamma$ is a dual curve  in $\d^3$ parametrized by arc-length, the {\it   binormal vector} $B$ of $\gamma$ is defined by $B(s)=T(s)\times N(s)$ and the torsion $\tau$ by 
$$\tau=\langle N',B\rangle.$$
\end{definition}
Here $\times$ is the cross product in $\d^3$ which is defined by bilinearity from the cross product of $\r^3$ \cite[p. 142]{ve}. Notice that $|B|=1$ and $B$ is orthogonal to $T$ and $N$. The basis $\{T,N,B\}$ is   orthonormal and it is called the {\it Frenet frame} of $\gamma$. It is also immediate the Frenet equations
\begin{equation}\label{ef}
\begin{split}
T'(s)=&  \kappa(s) N(s),\\
N'(s)=& -\kappa(s)T(s)+\tau(s)B(s),\\
B'(s)=& \qquad -\tau(s)N(s).
\end{split}
\end{equation}

We can express the normal and binormal vectors of $\gamma$ as well as its curvature and torsion in terms of that of $\alpha$ because $\alpha$ is also parametrized by arc-length. We   denote the Frenet frame of $\alpha$ by  $\{T_\alpha,N_\alpha,B_\alpha\}$ and $\kappa_\alpha$ and $\tau_\alpha$ stand for its curvature and torsion respectively. The following result is a consequence of   straightforward computations.

\begin{prop} Let $\gamma=\alpha+\varepsilon\beta$ be a curve in $\d^3$ parametrized by arc-length. Then  
\begin{equation}\label{k}
\begin{split}
T&=T_\alpha+\varepsilon \beta',\\
N&=N_\alpha+\frac{\varepsilon}{\kappa_\alpha}\left(\beta''-\langle \beta'',N_\alpha \rangle N_\alpha\right),\\
B&= B_\alpha+\frac{\varepsilon}{\kappa_\alpha}\left(T_\alpha\times\beta''+\kappa_\alpha \beta'\times N_\alpha-\langle \beta'',N_\alpha\rangle B_\alpha\right),\\
\kappa&=\kappa_\alpha+\varepsilon\langle \beta'',N_\alpha\rangle,\\
\tau&=\tau_\alpha+ \varepsilon\frac{1}{\kappa_\alpha}\left(\langle\beta'''-\frac{\kappa_\alpha'}{\kappa_\alpha}\beta''+\kappa_\alpha^2\beta',B_\alpha\rangle-\tau_\alpha\langle\beta'',N_\alpha\rangle\right).
\end{split}
\end{equation}
\end{prop}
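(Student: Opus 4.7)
The formula for $T$ is immediate from $\gamma'(s)=\alpha'(s)+\varepsilon\beta'(s)$. For $\kappa$, I would compute $\gamma''=\kappa_\alpha N_\alpha+\varepsilon\beta''$ and then
\[
\kappa^2=\langle\gamma'',\gamma''\rangle=\kappa_\alpha^2+2\varepsilon\kappa_\alpha\langle N_\alpha,\beta''\rangle,
\]
and take the dual square root $\sqrt{a+\varepsilon b}=\sqrt{a}+\varepsilon\, b/(2\sqrt{a})$ to get $\kappa=\kappa_\alpha+\varepsilon\langle\beta'',N_\alpha\rangle$. The formula for $N$ then follows from $N=\gamma''/\kappa$, multiplying $(\kappa_\alpha N_\alpha+\varepsilon\beta'')$ by the dual inverse $1/\kappa=1/\kappa_\alpha-\varepsilon\langle\beta'',N_\alpha\rangle/\kappa_\alpha^2$ and collecting terms by grouping the $N_\alpha$-component.

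For $B=T\times N$, I would use bilinearity of the dual cross product and $\varepsilon^2=0$: only the real$\,\times\,$real term and the two cross terms survive. The real part is $T_\alpha\times N_\alpha=B_\alpha$, and the $\varepsilon$-part collects into
\[
\frac{1}{\kappa_\alpha}\bigl(T_\alpha\times(\beta''-\langle\beta'',N_\alpha\rangle N_\alpha)\bigr)+\beta'\times N_\alpha,
\]
which equals $\frac{1}{\kappa_\alpha}\bigl(T_\alpha\times\beta''+\kappa_\alpha\beta'\times N_\alpha-\langle\beta'',N_\alpha\rangle B_\alpha\bigr)$ once we use $T_\alpha\times N_\alpha=B_\alpha$.

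The main work, and the step I expect to be the messiest, is the torsion $\tau=\langle N',B\rangle$. I would differentiate $N$ from step 3, using the Frenet equations of $\alpha$ (namely $N_\alpha'=-\kappa_\alpha T_\alpha+\tau_\alpha B_\alpha$) for the real part and product-rule on the $\varepsilon$-coefficient $\frac{1}{\kappa_\alpha}(\beta''-\langle\beta'',N_\alpha\rangle N_\alpha)$. The real component of $\langle N',B\rangle$ is $\langle N_\alpha',B_\alpha\rangle=\tau_\alpha$. For the $\varepsilon$-component I will need to evaluate several triple products; the key identities are
\[
\langle B_\alpha,T_\alpha\times\beta''\rangle=\langle N_\alpha,\beta''\rangle,\qquad \langle B_\alpha,\beta'\times N_\alpha\rangle=\langle T_\alpha,\beta'\rangle=0,
\]
\[
\langle T_\alpha,\beta'\times N_\alpha\rangle=-\langle B_\alpha,\beta'\rangle,
\]
together with the orthogonality $\langle T_\alpha,\beta'\rangle=0$ and $\langle N_\alpha,B_\alpha\rangle=0$.

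After substituting and simplifying, the $\tau_\alpha$-contributions coming from the $B_\alpha$-part of $N_\alpha'$ against the $\varepsilon$-correction of $B$ cancel (they give $\tau_\alpha\langle N_\alpha,\beta''\rangle-\tau_\alpha\langle\beta'',N_\alpha\rangle=0$), leaving a $\kappa_\alpha^2\langle B_\alpha,\beta'\rangle$ term from $-\kappa_\alpha T_\alpha$ paired with $\kappa_\alpha\beta'\times N_\alpha$, together with the $B_\alpha$-projection of the derivative $(\beta''/\kappa_\alpha-\langle\beta'',N_\alpha\rangle N_\alpha/\kappa_\alpha)'$. Factoring $1/\kappa_\alpha$ and regrouping inner products yields the stated expression
\[
\tau=\tau_\alpha+\varepsilon\frac{1}{\kappa_\alpha}\Bigl(\bigl\langle\beta'''-\tfrac{\kappa_\alpha'}{\kappa_\alpha}\beta''+\kappa_\alpha^2\beta',B_\alpha\bigr\rangle-\tau_\alpha\langle\beta'',N_\alpha\rangle\Bigr).
\]
The principal obstacle is purely organizational: keeping track of which triple products vanish by the orthogonality $\langle\alpha',\beta'\rangle=0$ versus by the Frenet structure of $\alpha$, and confirming the cancellation of the $\tau_\alpha$ cross-terms so that the final formula has the clean form above.
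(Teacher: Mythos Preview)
Your proposal is correct and is precisely the kind of direct computation the paper has in mind; the paper itself offers no argument beyond stating that the formulas are ``a consequence of straightforward computations,'' and your outline supplies exactly those computations, including the dual square root for $\kappa$, the dual inverse for $N$, the bilinear expansion of $T\times N$, and the careful bookkeeping for $\tau=\langle N',B\rangle$ with the cancellation of the $\tau_\alpha$ cross-terms.
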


 In case that $\gamma$ is a normalized curve, the vectors $\beta'$, $\beta''$ and $\beta'''$ can be expressed in terms of the Frenet frame   $\{T_\beta,N_\beta,B_\beta\}$ of $\beta$ and of its curvature $\kappa_\beta$ and torsion $\tau_\beta$.

\begin{corol} \label{c-n}
Let $\gamma=\alpha+\varepsilon\beta$ be a normalized curve in $\d^3$. Then
\begin{equation*}
\begin{split}
T&=T_\alpha+\varepsilon T_\beta,\\
 N&=N_\alpha+\varepsilon \frac{\kappa_\beta}{\kappa_\alpha}\left( N_\beta-\langle N_\alpha,N_\beta\rangle N_\alpha\right),\\
 B&=B_\alpha+\frac{\varepsilon}{\kappa_\alpha}\left(\kappa_\beta T_\alpha\times N_\beta+\kappa_\alpha T_\beta\times N_\alpha-\kappa_\beta \langle N_\alpha,N_\beta\rangle B_\alpha\right),\\
 \kappa&=\kappa_\alpha+\varepsilon\kappa_\beta\langle N_\alpha,N_\beta\rangle,\\
 \tau&=\tau_\alpha+\varepsilon\frac{1}{\kappa_\alpha} \langle(\kappa_\alpha^2-\kappa_\beta^2)T_\beta+\kappa_\alpha(\frac{\kappa_\beta}{\kappa_\alpha})'N_\beta+\kappa_\beta\tau_\beta B_\beta,B_\alpha\rangle\\
 &-\varepsilon\frac{\tau_\alpha}{\kappa_\alpha}\kappa_\beta\langle N_\beta,N_\alpha\rangle
 \end{split}
 \end{equation*}
 \end{corol}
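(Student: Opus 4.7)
The plan is to apply the preceding proposition (equation~\eqref{k}) and rewrite the derivatives $\beta'$, $\beta''$ and $\beta'''$ that appear there in terms of the Frenet data of $\beta$. Since $\gamma$ is \emph{normalized}, the curve $\beta$ is itself parametrized by arc-length, so its ordinary Euclidean Frenet equations apply. First I would record the three identities
$$\beta' = T_\beta, \qquad \beta'' = \kappa_\beta N_\beta, \qquad \beta''' = -\kappa_\beta^{\,2} T_\beta + \kappa_\beta' N_\beta + \kappa_\beta\tau_\beta B_\beta,$$
the last of which follows by differentiating $\kappa_\beta N_\beta$ and substituting $N_\beta' = -\kappa_\beta T_\beta + \tau_\beta B_\beta$.

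Next I would plug these three expressions into each line of equation~\eqref{k}, one at a time. The formulas for $T$, $\kappa$, $N$ and $B$ reduce by straightforward substitution: the tangent and the curvature use only $\beta'$ and $\beta''$, while for $N$ and $B$ the factor $\kappa_\beta/\kappa_\alpha$ emerges cleanly after replacing $\beta'' = \kappa_\beta N_\beta$. In each case the inner product $\langle\beta'',N_\alpha\rangle$ becomes $\kappa_\beta\langle N_\beta,N_\alpha\rangle$, which accounts for the projection terms in the stated formulas.

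The only step where one really has to be careful is the torsion, and this is the main obstacle. The required calculation is to expand
$$\beta''' - \frac{\kappa_\alpha'}{\kappa_\alpha}\beta'' + \kappa_\alpha^{\,2}\beta'$$
in the basis $\{T_\beta,N_\beta,B_\beta\}$. The coefficient of $T_\beta$ is immediately $\kappa_\alpha^{\,2}-\kappa_\beta^{\,2}$ and the coefficient of $B_\beta$ is $\kappa_\beta\tau_\beta$. For the coefficient of $N_\beta$ I would use the identity
$$\kappa_\beta' - \frac{\kappa_\alpha'}{\kappa_\alpha}\kappa_\beta = \kappa_\alpha\!\left(\frac{\kappa_\beta}{\kappa_\alpha}\right)'\!,$$
which is exactly the form appearing in the statement. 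Taking the inner product with $B_\alpha$ and multiplying by $1/\kappa_\alpha$ gives the first $\varepsilon$-contribution, and the remaining term $-\tau_\alpha\langle\beta'',N_\alpha\rangle/\kappa_\alpha = -\tau_\alpha\kappa_\beta\langle N_\beta,N_\alpha\rangle/\kappa_\alpha$ yields the final summand. Throughout, the hypothesis $\kappa_\alpha\neq 0$ (already built into the previous proposition) guarantees that no denominator vanishes, and the computation is otherwise purely algebraic.
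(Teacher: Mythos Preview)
Your proposal is correct and follows exactly the approach indicated in the paper: the corollary is obtained from the preceding proposition \eqref{k} simply by substituting the Frenet expressions $\beta'=T_\beta$, $\beta''=\kappa_\beta N_\beta$, $\beta'''=-\kappa_\beta^2 T_\beta+\kappa_\beta' N_\beta+\kappa_\beta\tau_\beta B_\beta$ that are available once $\beta$ is arc-length parametrized. Your handling of the torsion term, in particular the quotient-rule identity for the $N_\beta$-coefficient, is the one nontrivial piece and it is carried out correctly.
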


 \begin{example}  Let $\gamma(s)=(\cos s,\sin s,0)+\varepsilon (0,0,s)$. Then $\gamma$ is normalized and we have 
\begin{equation*}
\begin{split}
T(s)&=(-\sin s,\cos s,0)+\varepsilon (0,0,1),\\
 N(s)&=(-\cos s,-\sin s,0),\\
 B(s)&=(0,0,1)+\varepsilon (\sin s,-\cos s,0).
 \end{split}
 \end{equation*}
Then  $\kappa=1 $. Since $N'(s)=(\sin s,-\cos s,0)$,  we have  $\tau=\langle N',B\rangle=\varepsilon$.  This curve has constant  curvature and  torsion. Moreover, $\kappa$ is a real number and $\tau$ is a pure dual number.  
\end{example}

\begin{example} Let $\alpha$ be a curve in $\r^3$ parametrized by arc length and consider the dual curve $\gamma(s)=\alpha(s)+\varepsilon \alpha'(s)$. Since $\gamma'=\alpha'+\varepsilon \alpha''$, then   $\gamma$ is parametrized by arc-length because $\langle\alpha',\alpha''\rangle=0$. A computation of the Frenet frame gives 
\begin{equation*}
\begin{split}
T&=T_\alpha+ \varepsilon T_\alpha',\\
N&=N_\alpha+ \varepsilon N_\alpha',\\
B&=B_\alpha+ \varepsilon B_\alpha'.
\end{split}
\end{equation*}
It is then immediate 
\begin{equation*}
\begin{split}
\kappa&=\kappa_\alpha+ \varepsilon \kappa_\alpha',\\
\tau&=\tau_\alpha+ \varepsilon \tau_\alpha'.
\end{split}
\end{equation*}
\end{example}

\begin{example} Let $\gamma(s)=\alpha(s)+\varepsilon\alpha'(s)$, where $\alpha$ is the circular helix $\alpha(s)=(r\cos(s/m),r\sin (s/m), hs/m)$, where $r,h\in\r$, $r>0$ and $m=\sqrt{r^2+h^2}$. We know  $\kappa_\alpha=r/m^2$ and $\tau=h/m^2$. For the dual curve $\gamma(s)=\alpha(s)+\varepsilon \alpha'(s)$ we have $\kappa=\kappa_\alpha$ and $\tau=\tau_\alpha$. 
 \end{example}

\begin{remark} Formulas \eqref{k}  show that the dual curvature and dual torsion have   a   behaviour with respect to dilations similar as in Euclidean space. To be precise, let $\lambda>0$. Given a dual curve $\gamma$, consider
$$\tilde{\gamma}(s)=\lambda \gamma(\frac{s}{\lambda})= \tilde{\alpha}(s)+\varepsilon \tilde{\beta}(s) =\lambda \alpha(\frac{s}{\lambda})+\varepsilon\lambda\beta(\frac{s}{\lambda}).$$
Then $\tilde{\gamma}$ is also parametrized by arc-length. 
Since $\tilde{\beta}^{(n)}(s)=\frac{1}{\lambda^{n-1}}\beta^{(n)}(s/\lambda)$, then it is immediate 
$$ \kappa_{\tilde{\gamma}} =\frac{1}{\lambda}\kappa_\gamma ,\quad\tau_{\tilde{\gamma}} =\frac{1}{\lambda}\tau_\gamma. $$
\end{remark}

\section{Planar curves with constant curvature}\label{sec-3}

Once we have established a theory of Frenet frame for curves in dual space, it is natural to give results of classification according the curvature and the torsion. For example, we can cask for those curves with constant curvature or with constant torsion. In Euclidean space, there are not explicit parametrizations of curves with constant curvature or curves with constant torsion. See for example \cite{ba,mo,we}. In the case of dual curves, the problem is more difficult. For instance, if the dual curvature $\kappa$ is constant, then the real part of the curve has constant curvature, which it is already a problem in Euclidean space. Moreover, we need to find the dual part of the curve.

However, and as in the Euclidean space, we can restrict the problem to curves in the dual plane $\d^2$. If in the Euclidean plane the classification is well known (circles), in this section we solve the analogous problem in $\d^2$ determining explicitly the dual part, which it will be a type of spiral curve.

The theory developed until here for curves in dual space $\d^3$ also holds for curves in the dual plane $\d^2$ after the convenient changes. In such a case, there is no a notion of binormal vector and  torsion.    We find the classification of the planar curves with constant curvature in the following result.

\begin{theorem}\label{t2}
 Let $\gamma\colon I\to\d^2$ be a   curve parametrized by arc-length. If its curvature $\kappa$ is constant, then up to a rigid motion of $\r^2$,   $\alpha$ is a circle of radius $r>0$,  
\begin{equation}\label{p1}
\alpha(s)= r(\cos\frac{s}{r},\sin\frac{s}{r}).
\end{equation}
 Moreover, the dual part $\beta$ is parametrized by 
\begin{equation}\label{p2}
\begin{split}
\beta(s)&= -ar^2(\cos\frac{s}{r},\sin\frac{s}{r})+r(as+b)(-\sin\frac{s}{r},\cos\frac{s}{r})\\
&=r\left(-c_1\alpha(s)+(c_1 s+c_2)\alpha'(s)\right),
\end{split}
\end{equation}
where $a, b\in\r$ are constants.
\end{theorem}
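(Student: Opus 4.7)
The plan is to decouple the constancy of $\kappa = \kappa_0 + \varepsilon\kappa_1$ (with $\kappa_0,\kappa_1\in\r$) into separate conditions on $\alpha$ and $\beta$. The planar analogue of \eqref{k}, obtained by squaring $\gamma'' = \kappa_\alpha N_\alpha + \varepsilon\beta''$ and taking the dual square root, reads
$$
\kappa = \kappa_\alpha + \varepsilon\langle \beta'', N_\alpha\rangle,
$$
where $N_\alpha$ is the unit normal to $\alpha$, well defined because $\kappa_\alpha = \kappa_0 > 0$ (otherwise $|\gamma''|$ would fail to exist as a dual number). Equating real parts gives $\kappa_\alpha\equiv\kappa_0$, so by the classical theorem on plane curves $\alpha$ is, up to a rigid motion of $\r^2$, the circle of radius $r = 1/\kappa_0$ in \eqref{p1}, with $T_\alpha(s)=(-\sin(s/r),\cos(s/r))$ and $N_\alpha(s)=-(\cos(s/r),\sin(s/r))$.

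For the dual part, I would use the arc-length orthogonality $\langle\alpha',\beta'\rangle=0$: in $\r^2$ this forces $\beta'(s) = g(s)\,N_\alpha(s)$ for some smooth scalar function $g$. Differentiating and invoking the planar Frenet identity $N_\alpha' = -\kappa_\alpha T_\alpha$ gives
$$
\beta''(s) = g'(s)\,N_\alpha(s) - \kappa_\alpha g(s)\,T_\alpha(s),
$$
so the dual part of the curvature equation reduces to $g'(s)=\kappa_1$. Integrating, $g(s)=as+b$ with $a=\kappa_1$ and $b\in\r$ a constant of integration.

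The final step is to integrate $\beta'(s) = -(as+b)(\cos(s/r),\sin(s/r))$ componentwise. Two integrations by parts produce precisely the trigonometric expression in \eqref{p2}, up to an additive constant vector that can be absorbed by an $\varepsilon$-translation of $\gamma$. Rewriting $(\cos(s/r),\sin(s/r)) = \alpha(s)/r$ and $(-\sin(s/r),\cos(s/r)) = \alpha'(s)$ then recasts the result in the coordinate-free form $\beta(s) = r\bigl(-a\,\alpha(s) + (as+b)\,\alpha'(s)\bigr)$, matching the second display in \eqref{p2}.

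No step poses a genuine obstacle: the real part is handled by a classical ODE, and once $\beta'$ has been reduced to a normal scalar $g\,N_\alpha$ the remaining equation is the trivial first-order ODE $g'=\kappa_1$. The only point requiring care, more conceptual than technical, is justifying the dual square root used to split $\kappa$; this is legitimate precisely because $\kappa_\alpha$ is nowhere zero, which is forced by the hypothesis that $\kappa$ is a well-defined dual constant with nontrivial real part.
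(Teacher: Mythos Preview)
Your proof is correct and follows essentially the same route as the paper: write $\beta'=g\,N_\alpha$ using the orthogonality condition, compute $\beta''$, read off $\kappa=\kappa_\alpha+\varepsilon g'$, conclude that both $\kappa_\alpha$ and $g'$ are constant, and integrate. Your added remarks on why $\kappa_\alpha>0$ is needed for the dual square root and on absorbing the integration constant by an $\varepsilon$-translation are welcome clarifications that the paper leaves implicit.
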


\begin{proof} Since $\beta'$ is orthogonal to $\alpha'$, there is a function $\lambda=\lambda(s)$ such that $\beta'(s)=\lambda(s)N_\alpha(s)$. In particular, $\beta''=-\lambda\kappa_\alpha T_\alpha+\lambda' N_\alpha$. Thus the computation of the curvature yields 
$$\kappa=\kappa_\alpha+\lambda'\varepsilon.$$
We deduce that $\kappa$ is constant if and only if $\kappa_\alpha$ and $\lambda'$ are constant functions. Therefore $\kappa_\alpha=1/r$ for some $r>0$ and for $\lambda$, we have   $\lambda(s)=as+b$, $a,b\in\r$.   In particular, $\alpha$ is a circle of radius $r$. After a rigid motion of $\r^2$, we can reparametrize $\alpha$ by \eqref{p1}. Since $N_\alpha(s)= -(\cos\frac{s}{r},\sin\frac{s}{r})$, we have 
$$\beta'(s)=\lambda(s) N_\alpha(s)=-(as+b)(\cos\frac{s}{r},\sin\frac{s}{r}).$$
 An integration of this equation gives \eqref{p2}. 
 
\end{proof}
 The curve $\beta$ falls in  a known family of planar curves of spiral type. Notice that 
  $\beta$ in \eqref{p2} is not parametrized by arc-length because $|\beta'(s)|=|as+b|$. A reparametrization of $\beta$ by arc-length is 
$\beta(-\frac{b+\sqrt{b^2-2as}}{a})$, which we denote by $\beta(s)$ again.  Then   the curvature of $\beta$ is
\begin{equation}\label{b2}
\kappa_\beta(s)=|\beta''(s)|= \frac{1}{r\sqrt{b^2-2as}}.
\end{equation}
 In general, planar curves   in Euclidean plane $\r^2$  are called log-aesthetic curves of slope $\mu$ if the curvature $\kappa(s)$ is given by 
$$\kappa(s)=(as+b)^\mu,\quad \mu\not=0$$
for some constants $a$ and $b$ \cite{ha,ys}. These curves include clothoids ($\mu=1$) and log-spirals ($\mu=-1$). In the case of the curve $\beta$ of Thm. \ref{t2}, the value of $\mu$ is $\mu=-1/2$. See Fig. \ref{fig0}

 \begin{figure}[hbtp]
\centering
\includegraphics[width=.4\textwidth]{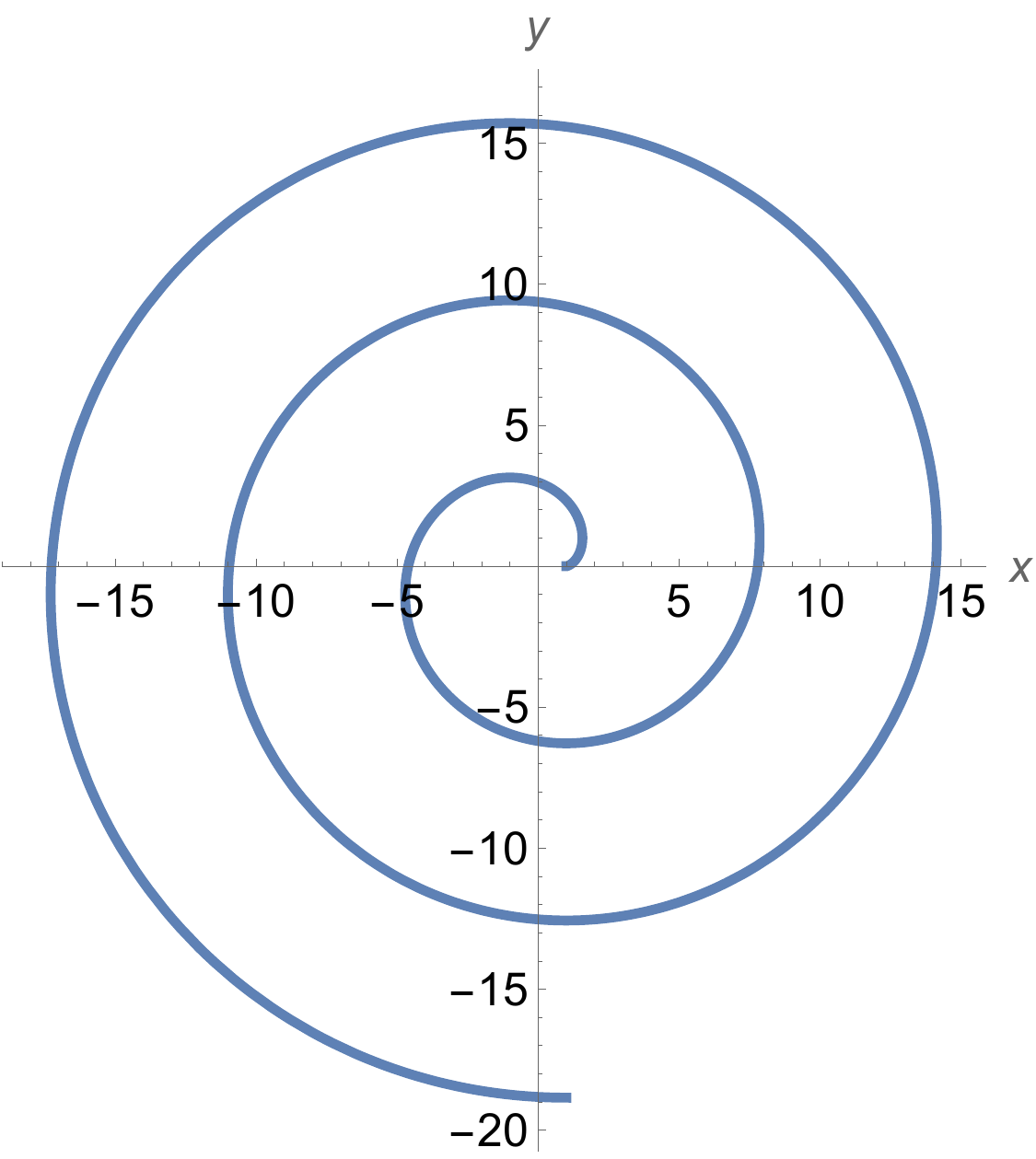}
\caption{The curve $\beta$ of Thm. \ref{t1}. Here $r=-a=1$, $b=0$}\label{fig0}
\end{figure}

\begin{remark} \label{rem-uniqueness}
Theorem \ref{t2} shows that  the   curve $\alpha$ is unique after a rigid motion of $\r^2$ because this is a consequence of the theory of Euclidean plane curves. However, the curve $\beta$ is not unique up to Euclidean congruences. This is due to the existence of constants $a$ and $b$ in \eqref{p2}.  
 \end{remark}

\section{Theorem of existence of dual curves}\label{sec-4}

In the theory of curves in Euclidean space it is known that given two smooth functions $k_0,\tau_0\colon I\to\r$, with $k_0>0$, there is a curve in $\r^3$ whose curvature  is $k_0$ and torsion  is $\tau_0$. Moreover, two curves with the same curvature and torsion coincide up to a rigid motion of $\r^3$. It is natural to ask if the same situation occurs for dual curves. We have pointed out in Rem. \ref{rem-uniqueness}  that uniqueness in the Euclidean sense is not true, that is, if two curves $\gamma_i(s)=\alpha_i(s)+\varepsilon\beta_i(s)$, $i=1,2$, in $\d^2$ have the same curvature, they are not necessarily congruent in the sense that there is a rigid motion of $\r^3$ carrying $\alpha_1$ in $\alpha_2$ and $\beta_1$ in $\beta_2$.   The same observation holds   for curves in dual space $\d^3$.

The   question  of existence is the following: given  two smooth functions   $k_0,\tau_0\colon I\to\d$, with $\textrm{Re}(k_0)>0$, is there a curve in $\d^3$ whose curvature and torsion are $k_0$ and   $\tau_0$, respectively?

 Since the real parts of  $\kappa$ and $\tau$ in \eqref{k} coincide with the curvature and the torsion of the real part $\alpha$ of     $\gamma$, then   $\alpha$ can be found by the classical theory of Euclidean curves. However, the problem acquires an extra  difficulty in finding the dual part because the curve $\beta$ has the  property to be orthogonal to $\alpha$. We look the problem in the dual plane $\d^2$. 

Let $k_0=k_1+\varepsilon k_2$ be a smooth function in $\d$ such that $k_1>0$. We ask for the existence of a dual curve in $\d^2$ with curvature $k_0$. According to \eqref{k}, we must have $\kappa_\alpha=k_1$ and $k_2=\langle \beta'',N_\alpha \rangle$. Thus, $\alpha$ is a curve in $\r^2$ with curvature $k_1$. The existence of $\alpha$ is assured by standard theory. Once we have the curve $\alpha$, we know its normal vector  $N_\alpha$. It remains to solve the equation
\begin{equation}\label{kb}
\langle \beta'',N_\alpha \rangle=k_2.
\end{equation}
Notice that \eqref{kb} is an ODE whose solution is assured. However, this is not enough because it may occur that $\beta$ is not orthogonal to $\alpha$, as it is required. To be precise, since   \eqref{kb} is of second order, the solution has integration constants which must be chosen in order to ensure orthogonality of $\alpha'$ and $\beta'$.  We see an example.

\begin{example} Let $k_0=1+\varepsilon$. For the curve $\gamma=\alpha+\varepsilon\beta$, we know   $\alpha(s)=(\cos s,\sin s)$. Since $\beta'$ is orthogonal to $T_\alpha$, then $\beta'=\lambda N_\alpha$ for some function $\lambda$. Then $\beta''=\lambda'N_\alpha-\lambda T_\alpha$. Thus 
$\langle \beta'',N_\alpha \rangle=1$ writes as $\lambda '=1$ and this gives $\lambda(s)=s+c$, $c\in\r$. This implies 
$$\beta'(s)=-(s+c_2)(\cos s,\sin s),\quad c_2\in\r.$$ 
An integration of this equation gives the parametrization   \eqref{p2}.

\end{example}

This example shows the strategy to follows. In the case that $\gamma$ is a planar dual curve, $\gamma\colon I\to\d^2$, it is possible to give an explicit  constructive method to find the curve $\gamma$, answering to the initial question of this section. 

\begin{theorem}  \label{t-41} If $k_0\colon I\to\r$ is a smooth function  with $\textrm{Re}(k_0)>0$, then there exist a curve $\gamma\colon I\to\d^2$ whose curvature   is   $k_0(s)$.  The real part of $\gamma$ is unique up to a rigid motion of $\r^2$.
\end{theorem}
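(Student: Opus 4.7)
Write $k_0 = k_1 + \varepsilon k_2$ with $k_1, k_2 \colon I \to \r$ smooth and $k_1 > 0$. By the formula for $\kappa$ in \eqref{k} (specialized to $\d^2$), producing a dual curve $\gamma = \alpha + \varepsilon \beta$ of curvature $k_0$ amounts to finding
\begin{equation*}
\alpha \colon I \to \r^2 \quad \text{with } \kappa_\alpha = k_1, \qquad \beta \colon I \to \r^2 \quad \text{with } \langle \alpha', \beta' \rangle = 0 \text{ and } \langle \beta'', N_\alpha \rangle = k_2.
\end{equation*}
My plan is to treat these two requirements in sequence.

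For the real part, the classical fundamental theorem of planar curves guarantees the existence of $\alpha$ parametrized by arc-length with prescribed curvature $k_1$, and this curve is unique up to a rigid motion of $\r^2$. In particular the Frenet frame $\{T_\alpha, N_\alpha\}$ and the identity $N_\alpha' = -k_1 T_\alpha$ are at our disposal. This already settles the uniqueness statement in the theorem.

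For the dual part, the orthogonality condition $\langle \alpha', \beta' \rangle = 0$ in the plane forces $\beta'(s) = \lambda(s)\, N_\alpha(s)$ for some scalar function $\lambda$, because $N_\alpha$ spans the orthogonal line to $T_\alpha$ at each point. Differentiating and using $N_\alpha' = -k_1 T_\alpha$ yields
\begin{equation*}
\beta''(s) = \lambda'(s)\, N_\alpha(s) - \lambda(s) k_1(s)\, T_\alpha(s),
\end{equation*}
so the curvature equation $\langle \beta'', N_\alpha \rangle = k_2$ reduces to the first order ODE $\lambda' = k_2$. I would solve this by a primitive, $\lambda(s) = c_1 + \int_{s_0}^{s} k_2(\sigma)\, d\sigma$, and then define
\begin{equation*}
\beta(s) = c_2 + \int_{s_0}^{s} \lambda(\sigma)\, N_\alpha(\sigma)\, d\sigma,
\end{equation*}
with $c_1 \in \r$ and $c_2 \in \r^2$ free constants.

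The construction produces a curve $\gamma = \alpha + \varepsilon \beta$ that is automatically parametrized by arc-length (because $\alpha$ is and $\beta' \perp \alpha'$) and has the prescribed curvature $k_0$. The only conceptual obstacle I anticipate is precisely the one that failed in higher dimension: one must verify that solving the ODE can be carried out in a way compatible with the orthogonality constraint. In the planar case this is costless because orthogonality prescribes the direction of $\beta'$ a priori, leaving only the scalar $\lambda$ to be determined; in $\d^3$ the analogous reduction would require controlling two scalar components of $\beta'$ in the plane spanned by $N_\alpha$ and $B_\alpha$, which is why the theorem is stated for $\d^2$. The non-uniqueness of $\beta$, already observed in Remark \ref{rem-uniqueness}, is reflected in the freedom of the constants $c_1$ and $c_2$.
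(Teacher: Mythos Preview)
Your argument is correct and mirrors the paper's own proof essentially step for step: split $k_0=k_1+\varepsilon k_2$, obtain $\alpha$ from the classical fundamental theorem with $\kappa_\alpha=k_1$, write $\beta'=\lambda N_\alpha$ by planar orthogonality, reduce $\langle\beta'',N_\alpha\rangle=k_2$ to $\lambda'=k_2$, and integrate twice. The only differences are cosmetic (the paper also records the explicit formula $\alpha(s)=(\int\cos\theta,\int\sin\theta)$ with $\theta=\int k_1$), and your remarks on the free constants $c_1,c_2$ and the contrast with $\d^3$ are consistent with the paper's discussion.
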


\begin{proof} 
If $k_0(s)=k_1(s)+\varepsilon k_2(s)$, using \eqref{k}, we require to solve the system
\begin{equation}\label{ex}
\begin{split}
 \kappa_\alpha&=k_1,\\
 \langle \beta'',N_\alpha\rangle&=k_2.
\end{split}
\end{equation}
From   the first equation, we consider a curve $\alpha\colon I\to\r^2$ whose curvature $\kappa_\alpha$ is $k_1$. This curve is determined up to a rigid motion of $\r^2$. The solution of the first equation is given   after initial conditions of $\alpha$ and $\alpha'$ at a given point  $s=s_0\in I$.  Furthermore, from the local theory of Euclidean planar curves, it is known that if we define
$$\theta(s)=\int_{s_0}^sk_1(u)\, du,$$
  the curve $\alpha$ defined by 
\begin{equation}\label{a2}
\alpha(s)=\left(\int_{s_0}^s\cos\theta(u)\, du, \int_{s_0}^s\sin\theta(u)\, du\right)
\end{equation}
has curvature $\kappa_\alpha=\theta'$. Once we know the curve $\alpha$ we are going  to find $\beta$. Because $\beta'$ is orthogonal to $\alpha'$, we write 
$$\beta'(s)=\lambda(s)N_\alpha(s),$$
 where $\lambda$ is determined by the second equation of \eqref{ex}. Indeed, $\beta''=\lambda' N_\alpha-\lambda k_1 \alpha'$. Thus the condition $\langle \beta'',N_\alpha\rangle=k_2$  is 
$$ \lambda'(s)=k_2(s).$$
This gives $\lambda(s)=\int^s_{s_0} k_2(u)\ du$. Finally the curve $\beta$ is obtained by solving the ODE
\begin{equation}\label{b5}
\beta'(s)=\left(\int^s_{s_0} k_2(u)\ du\right)N_\alpha(s).
\end{equation}
\end{proof}

Theorem \ref{t-41} provides an explicit method to find the curve $\gamma=\alpha+\varepsilon \beta$. The curve $\alpha$ is given in \eqref{a2} and $\beta$ from \eqref{b5}, namely,   
$$\beta(s)=-\int^s_{s_0}  \left(\int^u_{s_0}  k_2(w)\ dw\right)\left( -\sin\theta(u),\cos\theta(u) \right)\, du.$$
We show two examples. 

\begin{example} Let $k_0(s)=1/r+  2\varepsilon s$, with $r>0$. Then  $k_1(s)=1/r$ and $k_2(s)=2s$. By taking $s_0=0$, then $\theta(s)=s/r$. Now  $\alpha$ is a circle of radius $r$, namely, 
$\alpha(s)=r(\sin(s/r),-\cos(s/r))$. On the other hand, 
$$\int^u_0 k_2(w)\, dw= u^2+a,\quad a\in\r.$$ 
The computation of $\beta$ yields
\begin{equation*}
\begin{split}
\beta(s)&=r(s^2-2 r^2)( \cos  (\frac{s}{r}),\sin(\frac{s}{r}))+2r^2(  (r-s)  \sin \left(\frac{s}{r}\right),s \cos \left(\frac{s}{r}\right) )+ar(\cos(\frac{s}{r}),\sin(\frac{s}{r}))\\
&=r(s^2+2r^2+a)\alpha'(s)-2rs\alpha(s)+2r^3(\sin (\frac{s}{r}),0) .
\end{split}
\end{equation*}
After a reparametrization by arc-length of $\beta$, the curvature of $\beta$ is
$$\kappa_\beta(s)=\frac{1}{\sqrt[3]{9}r\, s^\frac23  }.$$
\end{example}

\begin{example} Let $k_0(s)=\frac{1}{s}+\frac{1}{s^2}\varepsilon$, in particular, $k_1(s)=\frac{1}{s}$, $k_2(s)=\frac{1}{s^2}$ and $\theta(s)=\log s$. A computation of $\alpha$ yields
$$\alpha(s)=\frac{s}{2}  \sin (\log (s))(1,1)+\frac{s}{2}\cos(\log (s))(1,-1).$$
Now $\int^u k_2(w)\, dw= -\frac{1}{u}+a$, $a\in\r$. The computation of $\beta$ gives
\begin{equation*}
\begin{split}
\beta(s)&=-\left(\cos (\log (s)),\sin (\log (s))\right)\\
&-\left(\frac{1}{2} a s (\cos (\log (s))+\sin (\log (s))),\frac{1}{2} a s (\sin (\log (s))-\cos (\log (s)))\right).
\end{split}
\end{equation*}
The curvature of $\beta$ is $\kappa_\beta(s)=\frac{1}{1+as}$. This proves that $\beta$ is a log-spiral. Notice that if $a=0$, then $\beta$ is a circle of radius $1$.  
\end{example}

We now consider the existence problem for dual curves in $\d^3$. The answer to the initial question is affirmative. 

\begin{theorem} \label{t1}
Given differentiable functions $k_0=k_0(s)$, with $\textrm{Re}(k_0)>0$ and $\tau_0=\tau_0(s)$, $s\in I\subset\r$, there exists a   curve $\gamma\colon I\to\d^3$, $\gamma=\gamma(s)$, such that $s$ is the arc length and $k_0$ and $\tau_0$ are the curvature and torsion of $\gamma$, respectively. Moreover, the real part of $\gamma$  is unique up to a rigid motion of $\r^3$.  
\end{theorem}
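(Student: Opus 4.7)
The plan is to use formulas \eqref{k} to split the problem into a classical Euclidean question for the real part $\alpha$ and a linear ODE question for the dual part $\beta$.

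Write $k_0 = k_1 + \varepsilon k_2$ and $\tau_0 = \tau_1 + \varepsilon \tau_2$ with $k_i, \tau_i\colon I\to\r$ smooth. Matching real parts in \eqref{k} forces $\kappa_\alpha = k_1$ and $\tau_\alpha = \tau_1$. Since $k_1 = \textrm{Re}(k_0) > 0$, the classical fundamental theorem of curves in $\r^3$ produces a unit-speed curve $\alpha\colon I\to\r^3$ realizing this curvature and torsion, unique up to a rigid motion, together with its Frenet frame $\{T_\alpha, N_\alpha, B_\alpha\}$.

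For the dual part, I would mimic the strategy of Thm. \ref{t-41}: the orthogonality condition of Prop. \ref{pr1} forces $\beta' \perp T_\alpha$, so I set $\beta' = \mu N_\alpha + \nu B_\alpha$ for scalars $\mu, \nu\colon I\to\r$ to be determined. Differentiating this expression using the Frenet-Serret equations for $\alpha$ yields explicit expansions of $\beta''$ and $\beta'''$ in the frame $\{T_\alpha, N_\alpha, B_\alpha\}$ whose coefficients are linear in $\mu, \nu$ and their first and second derivatives. Substituting these expansions into the dual parts of $\kappa$ and $\tau$ in \eqref{k} converts the conditions $\kappa = k_0$ and $\tau = \tau_0$ into a linear ODE system in $(\mu,\nu)$ on $I$: a first-order equation from the curvature condition, and a second-order equation from the torsion condition. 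Because all coefficients are smooth on $I$ and $\kappa_\alpha = k_1$ does not vanish, standard existence theory for linear ODE produces a global solution from any choice of initial data at a base point $s_0\in I$. Then $\beta(s) := \int_{s_0}^s (\mu(u) N_\alpha(u) + \nu(u) B_\alpha(u))\,\dd u$ and $\gamma = \alpha + \varepsilon \beta$; since $\beta'$ lies in $\textrm{span}(N_\alpha, B_\alpha)$ by construction, $\langle \alpha', \beta'\rangle = 0$, so $\gamma$ is arc-length parametrized, and reversing the derivation recovers $\kappa = k_0$ and $\tau = \tau_0$.

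The main obstacle is the derivation of the torsion equation: expanding $\beta'''$ in the Frenet frame of $\alpha$ and feeding it into the rather intricate formula for $\tau$ in \eqref{k} (with its correction $-\tau_\alpha\langle \beta'', N_\alpha\rangle$ and the factor $\kappa_\alpha'/\kappa_\alpha$) demands careful bookkeeping to verify that the result is genuinely a second-order linear ODE in $(\mu,\nu)$ whose principal term involves $\nu''$. Everything else -- the existence of $\alpha$, the solvability of a linear ODE system on $I$, and the uniqueness of $\alpha$ up to rigid motion of $\r^3$ -- reduces to classical results.
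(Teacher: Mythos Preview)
Your proposal is correct and follows essentially the same route as the paper: write $\beta' = x N_\alpha + y B_\alpha$, differentiate twice via the Frenet equations of $\alpha$, and substitute into the dual parts of \eqref{k} to obtain a linear ODE system (the paper records it explicitly as \eqref{k3}) whose solvability is guaranteed by $\kappa_\alpha = k_1 > 0$. The only difference is that the paper carries out the bookkeeping you flag as the ``main obstacle'' and displays the resulting system, confirming it is linear with leading term $\kappa_\alpha y''$.
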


\begin{proof}
Let 
\begin{equation}\label{kt}
k_0(s)=k_1(s)+\varepsilon k_2(s) , k_1(s)>0,\qquad \tau_0(s)=\tau_1(s)+\varepsilon \tau_2(s).
\end{equation}
 If $\gamma=\alpha+\varepsilon \beta$ is a curve in $\d^3$ with curvature $k_0$ and torsion $\tau_0$,  by the formulas \eqref{k}, we deduce that $\alpha$ is a curve in $\r^3$ whose curvature and torsion are $\kappa_\alpha=k_1$ and     $\tau_\alpha=\tau_1$, respectively. This determines the real part of $\gamma$. Moreover, $\alpha$ is unique up to a rigid motion of $\r^3$, proving the last statement of theorem.

Following \eqref{k}, the curve $\beta$ that we are looking for satisfies
\begin{equation}\label{k2}
\left\{\begin{split}
k_2&=\langle \beta'',N_\alpha\rangle,\\
\tau_2&=\frac{1}{\kappa_\alpha}\left(\langle\beta'''-\frac{\kappa_\alpha'}{\kappa_\alpha}\beta''+\kappa_\alpha^2\beta',B_\alpha\rangle-\tau_\alpha\langle\beta'',N_\alpha\rangle\right).
\end{split}\right.
\end{equation}
Since $\beta'(s)$ is orthogonal to $T_\alpha(s)$ for all $s\in I$, then there are two functions $x=x(s)$ and $y=y(s)$ to determine such that
\begin{equation}\label{b3}
\beta'(s)=x(s) N_\alpha(s)+y(s) B_\alpha(s)).
\end{equation}
 A computation of   $\beta''$ and $\beta'''$ gives:
 \begin{equation*}
\begin{split}
\beta''&=-\kappa_\alpha x\, T_\alpha+(x'-\tau_\alpha y)N_\alpha+(x\tau_\alpha +y')B_\alpha,\\
\beta'''&=(-2 \kappa_\alpha  x'-x \kappa_\alpha '+\kappa_\alpha  \tau_\alpha  y)T_\alpha+(x''-x \left(\kappa_\alpha ^2+\tau_\alpha ^2\right)-2 \tau_\alpha  y'-y \tau_\alpha ')N_\alpha\\
&+(2 \tau_\alpha  x'+x \tau_\alpha '+y''-y\tau_\alpha ^2)B_\alpha.
\end{split}
\end{equation*}
 Computing \eqref{k2}, we have
  \begin{equation}\label{k3}
  \left\{
\begin{split}
k_2&=x'-y\tau_\alpha,\\
\kappa_\alpha^2\tau_2&= \kappa_\alpha y'' +\kappa_\alpha  \left(\tau_\alpha  x'+x \tau_\alpha '\right)-\kappa_\alpha ' \left(\tau_\alpha  x+y'\right)+\kappa_\alpha ^3 y.
\end{split}
\right.
\end{equation}
Notice that the system \eqref{k3} is linear. Since $\kappa_\alpha=k_1>0$, standard theory of ODE assures the existence of a unique solution  of \eqref{k3} after initial conditions are given. 
  If $x=x(s)$ and $y=y(s)$ are solutions of \eqref{k3}, the curve $\beta$ is obtained by solving \eqref{b3}. This concludes the proof.  
 \end{proof}
 
 \section{Examples}\label{sec-5}
The proof of Thm. \ref{t1}  gives a practical method to find the dual curve $\gamma$. Suppose that the functions $k_0$ and $\tau_0$ are given by \eqref{kt}.
\begin{enumerate}
\item Find the curve $\alpha$ such that $\kappa_\alpha=k_1$ and $\tau_\alpha=\tau_1$.
\item Compute the Frenet frame of $\alpha$.
\item Using the values of $k_2$ and $\tau_2$, find the solutions $x(s)$ and $y(s)$ of \eqref{k3}.
\item Using the above two steps, solve \eqref{b3}.
\end{enumerate}

In the following example, we show a curve where the torsion is a pure dual function.

\begin{example} Let $k_0(s)=1$ and $\tau(s)=\varepsilon s$. From   Thm. \ref{t1}, the curve $\alpha$ has curvature $1$ and torsion $0$. Thus $\alpha$ is a circle of radius $1$, say  $\alpha(s)=(\cos s,\sin s,0)$.  
Now $k_2=\tau_1=0$ and $\tau_2=s$. The system \eqref{k3} is
 \begin{equation*} 
   \left\{
\begin{split}
0&=x',\\
s&=   y'' +y +  y.
\end{split}
\right.
\end{equation*}
Then $N(s)=-\alpha(s)$ and $B(s)=(0,0,1)$. The solutions are  
 \begin{equation*}
 \left\{ 
\begin{split}
x(s)&=c_1 ,\\
 y(s)&=s+c_2 \cos(s)+c_3\sin(s),
 \end{split}\right.
\end{equation*}
where $c_i\in\r$ are constants.  Then the curve $\beta$ is 
$$\beta(s)=\left( -c_1 \sin (s),c_1 \cos (s),c_2 \sin (s)-c_3 \cos (s)+\frac{s^2}{2}\right).$$
\end{example}

We study curves with  with zero torsion.  In Euclidean space $\r^3$, these curves are characterized by the fact to be contained in planes of $\r^3$. For dual curves, the dual part cannot be found explicitly.

\begin{theorem} \label{t41}
If a dual curve $\gamma$ has zero torsion $\tau(s)=0$ for all $s\in I$, then   $\alpha$ is a planar curve and $\beta$ is determined by \eqref{b3}, where $x$ and $y$ satisfy the system 
  \begin{equation}\label{tt} 
  \left\{
\begin{split}
k_2&=x',\\
0&= \kappa_\alpha y''  -\kappa_\alpha '  y' +\kappa_\alpha ^3 y.
\end{split}
\right.
\end{equation}
\end{theorem}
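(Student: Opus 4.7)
The plan is to specialize the framework from the proof of Thm.~\ref{t1} to the vanishing-torsion case. Write $\tau(s)=\tau_1(s)+\varepsilon\tau_2(s)$; the hypothesis $\tau\equiv 0$ is equivalent to $\tau_1\equiv 0$ and $\tau_2\equiv 0$. Reading off the real part of the expression for $\tau$ in \eqref{k} gives $\tau_\alpha=\tau_1=0$. Since the Frenet apparatus of $\gamma$ is defined under $|\gamma''|\neq 0$, the real part $\alpha$ is regular with $\kappa_\alpha>0$, so the usual three-dimensional Frenet theory of curves in $\r^3$ applies, and $\tau_\alpha\equiv 0$ forces $\alpha$ to lie in an affine plane of $\r^3$. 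This settles the first assertion of the theorem.

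For the second assertion, I would mimic the structure used in Thm.~\ref{t1}: since $\beta'\perp T_\alpha$, write $\beta'=xN_\alpha+yB_\alpha$ as in \eqref{b3}. Substituting $\tau_\alpha=0$ (and consequently $\tau_\alpha'=0$) together with $\tau_2=0$ into the system \eqref{k3} kills every term containing $\tau_\alpha$ or $\tau_\alpha'$. The first equation of \eqref{k3} collapses to $k_2=x'$, and, after cancelling the overall factor of $\kappa_\alpha$, the second reduces to $\kappa_\alpha y''-\kappa_\alpha' y'+\kappa_\alpha^3 y=0$. These are exactly the two equations in \eqref{tt}. Once $x$ and $y$ are produced by the standard theory of linear ODEs applied to \eqref{tt}, the curve $\beta$ is obtained by integrating \eqref{b3}.

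I do not foresee any genuine obstacle here: the result is essentially a direct corollary of Thm.~\ref{t1}, and no fresh existence argument is required. The only mild points worth checking are that cancelling $\kappa_\alpha$ when passing from the second equation of \eqref{k3} to \eqref{tt} is legitimate (it is, because $\textrm{Re}(\kappa)=\kappa_\alpha=k_1>0$ by assumption), and that, although $\alpha$ is planar, its binormal $B_\alpha$ is still a well-defined constant unit vector orthogonal to the plane of $\alpha$, so that the $yB_\alpha$ component of $\beta'$ is meaningful and allows $\beta$ itself to leave the plane of $\alpha$. Thus the closest thing to a subtlety is interpretational, not technical.
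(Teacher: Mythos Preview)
Your proposal is correct and follows exactly the same route as the paper: specialize the setup of Thm.~\ref{t1}, use $\tau_\alpha=\tau_1=0$ to conclude $\alpha$ is planar, and substitute $\tau_\alpha=\tau_\alpha'=\tau_2=0$ into \eqref{k3} to obtain \eqref{tt}. One harmless inaccuracy: no ``cancelling the overall factor of $\kappa_\alpha$'' is needed, since after the substitutions the second equation of \eqref{k3} already reads $0=\kappa_\alpha y''-\kappa_\alpha' y'+\kappa_\alpha^3 y$ (the term $-\kappa_\alpha' y'$ carries no $\kappa_\alpha$ factor to cancel).
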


\begin{proof}
We use the proof of Thm. \ref{t1}. Since $\tau=0$, then $\tau_\alpha=0$ and  the real part is a curve with zero torsion, hence, $\alpha$ is a planar curve. On the other hand, we know $\tau_2=0$ in the system \eqref{k3}, obtaining \eqref{tt}.
\end{proof}

\begin{example}\label{ex1}
We find a curve with torsion $\tau=0$ and curvature $\kappa=1-\varepsilon\sin s$.  We know     $\alpha(s)=(\cos s,\sin s,0)$. Then  $N_\alpha(s)=-(\cos s,-\sin s,0)$ and $B_\alpha(s)=(0,0,1)$. Since 
 $k_2(s)=-\sin s$,   the first equation of \eqref{tt} gives $x(s)=\cos s+c_1$, $c_1\in\r$. The second equation of \eqref{tt} is $y''+y=0$ whose solution is $y(s)=c_2\cos s+c_3\sin s$, $c_2,c_3\in\r$. Using the functions $x(s)$ and $y(s)$, the solution of \eqref{b3} is 
\begin{equation}\label{b6}
\beta(s)=\left(\begin{array}{c}
 -\frac{1}{2} (s+2 c_1\sin (s)+\sin (s) \cos (s))\\
 \frac{1}{2} \cos (s) (2 c_1+\cos (s))\\
 c_2 \sin (s)-c_3 \cos (s)\end{array}\right).
 \end{equation}
 In Figure \ref{fig3}, we show some pictures of this curve for different choices of the integration constants $c_i$.  If $c_2=c_3=0$, then $\beta$ is a planar curve contained in the $xy$-coordinate plane.

  \begin{figure}[hbtp]
\centering
\includegraphics[width=.4\textwidth]{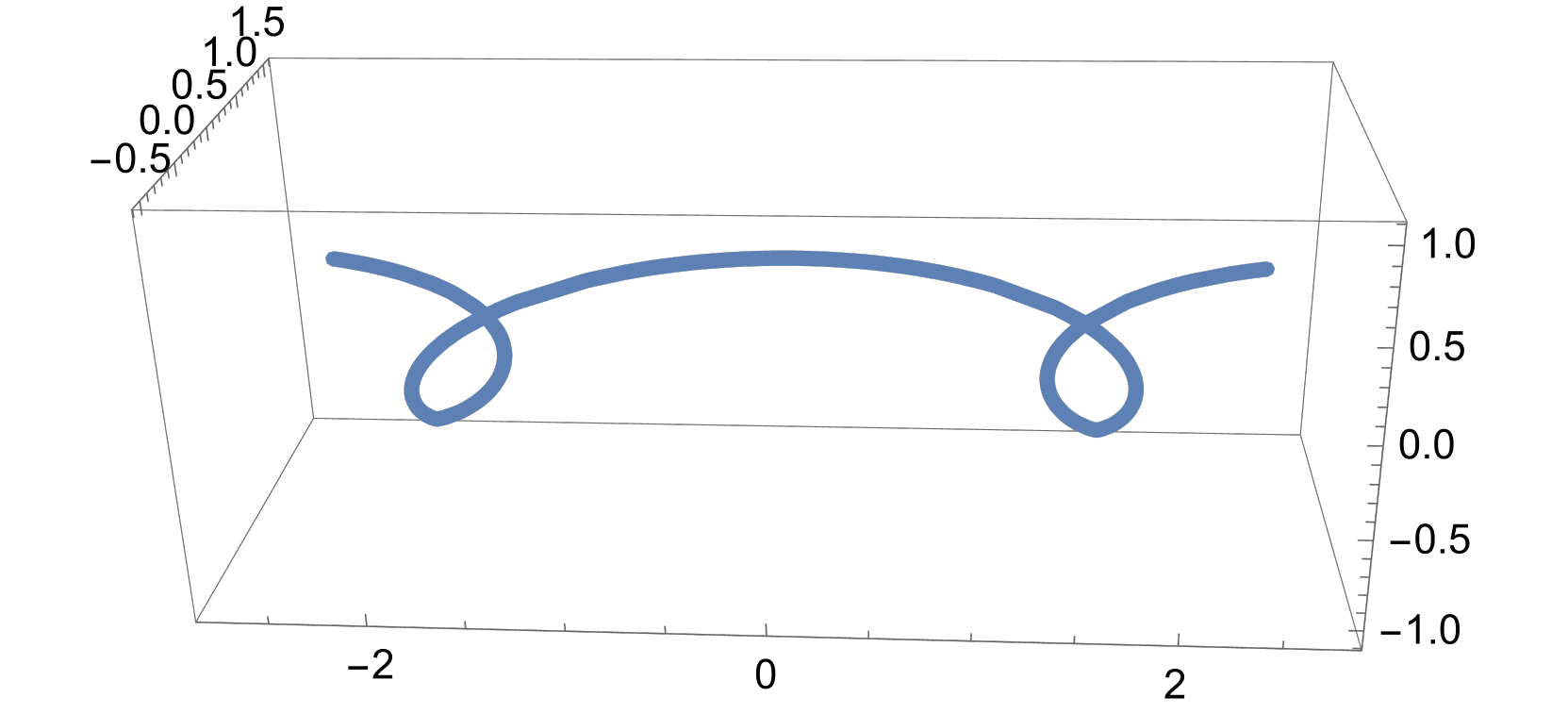}\  \includegraphics[width=.23\textwidth]{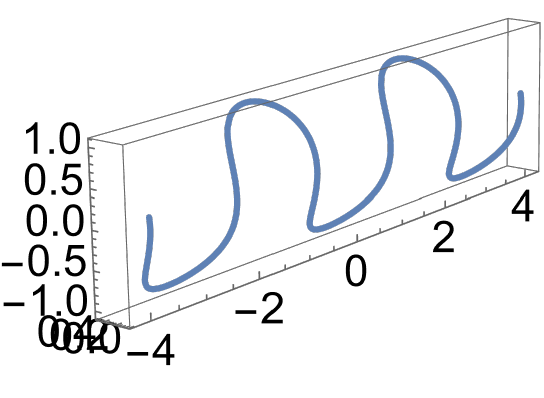}\  \includegraphics[width=.35\textwidth]{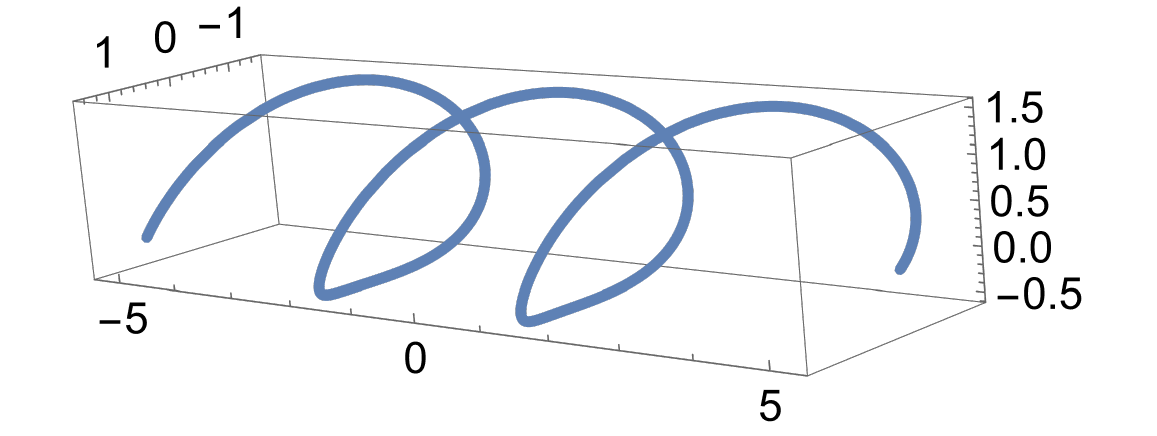}
\caption{Dual part $\beta$ of a dual curve with $\kappa=1-\varepsilon \sin s$ and $\tau=0$: see \eqref{b6} in Ex. \ref{ex1}.  Left: $(c_1,c_2,c_3)=(1,0,0)$, planar curve; middle: $(c_1,c_2,c_3)=(0,0,1)$; right $(c_1,c_2,c_3)=(1,1,1)$. }\label{fig3}
\end{figure}
 \end{example}

\section{Curves with constant curvature and torsion }\label{sec-6}

  We classify all dual curves with constant curvature and constant torsion. The key fact is that in such a case, the real part is a circular helix and the coefficients of the linear system \eqref{k3} are constants.

\begin{theorem} \label{t51}
Let $k_0=k_1+\varepsilon k_2$, $k_1>0$, and $\tau_0=\tau_1+\varepsilon\tau_2$ be constants. The dual curves $\gamma(s)=\alpha(s)+\varepsilon \beta(s)$ with curvature and torsion $k_0$ and $\tau_0$ respectively, are parametrized as follows. Let $r,h\in\r$, $r>0$, such that   
\begin{equation}\label{rh}
k_1=\frac{r}{m^2},\quad \tau_1=\frac{h}{m^2},\quad m=\sqrt{r^2+h^2}.
\end{equation}
Then the curve $\alpha$ is the helix of curvature $k_1$ and torsion $\tau_1$, which after a rigid motion is parametrized by 
$$\alpha(s)=(r\cos\left(\frac{s}{m}\right),r\sin \left(\frac{s}{m}\right),\frac{h}{m}s).$$
The curve $\beta$ is given by 
\begin{equation*}
\begin{split}
\beta(s)&=\cos \left(\frac{s}{m}\right)\left(-A,\frac{B}{m},-c_3 r m \right)+\sin \left(\frac{s}{m}\right)\left(-\frac{B}{m},-A,c_2 r\right) \\
&+ s\left(c_3 h,-\frac{c_2 h}{m},\frac{r}{m}(r\tau_2-hk_2)\right).
\end{split}
\end{equation*}
 where
\begin{equation*}
\begin{split}
A&=(r^2-h^2) k_2+2 r h \tau_2 ,\\
B&=rs(r k_2 + h  \tau_2)+m^2 (  c_3h+c_1).
\end{split}
\end{equation*}
\end{theorem}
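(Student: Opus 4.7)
The plan is to apply the constructive procedure behind Theorem~\ref{t1} directly. Since $k_1=\mathrm{Re}(k_0)>0$ and $\tau_1=\mathrm{Re}(\tau_0)$ are both constants, the real part $\alpha$ is a curve in $\r^3$ with constant Euclidean curvature $\kappa_\alpha=k_1$ and constant torsion $\tau_\alpha=\tau_1$, so the classical fundamental theorem of Euclidean curves forces $\alpha$ to be a circular helix determined up to a rigid motion. Choosing $r>0$ and $h\in\r$ by \eqref{rh} and applying a convenient rigid motion brings $\alpha$ to the stated form, which already settles the uniqueness clause for the real part.

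To identify $\beta$ I write $\beta'(s)=x(s)N_\alpha(s)+y(s)B_\alpha(s)$ as in \eqref{b3} and substitute into \eqref{k3}. Because $\kappa_\alpha$ and $\tau_\alpha$ are constants, every term containing $\kappa_\alpha'$ vanishes and the system collapses to
\begin{equation*}
x'=k_2+\tau_1\, y,\qquad y''+(k_1^2+\tau_1^2)\,y=k_1\tau_2-\tau_1 k_2,
\end{equation*}
after eliminating $x'$ from the second equation by means of the first. The identities in \eqref{rh} give $k_1^2+\tau_1^2=1/m^2$ and $k_1\tau_2-\tau_1 k_2=(r\tau_2-hk_2)/m^2$, so the $y$-equation is a linear harmonic oscillator with constant forcing whose general solution is
\begin{equation*}
y(s)=c_2\cos(s/m)+c_3\sin(s/m)+(r\tau_2-hk_2).
\end{equation*}
Back-substituting into $x'=k_2+\tau_1\,y$ and one quadrature produce $x(s)$ as an affine function of $s$ plus a sinusoid of angular frequency $1/m$, together with a free additive constant $c_1$.

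The remaining step is to integrate $\beta'(s)=x(s)N_\alpha(s)+y(s)B_\alpha(s)$ componentwise. I would first read off the Frenet frame of $\alpha$ from its explicit form,
\begin{equation*}
N_\alpha(s)=\bigl(-\cos(s/m),\,-\sin(s/m),\,0\bigr),\qquad B_\alpha(s)=\bigl(\tfrac{h}{m}\sin(s/m),\,-\tfrac{h}{m}\cos(s/m),\,\tfrac{r}{m}\bigr),
\end{equation*}
and then assemble $\beta'$ and integrate. The three integration constants generated at this last step simply translate $\beta$ by a fixed vector of $\r^3$ and can be discarded, since such a translation alters neither $\kappa$ nor $\tau$.

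I expect the principal obstacle to be the bookkeeping rather than any conceptual point: the affine part of $x$ produces integrals of the form $\int s\cos(s/m)\,ds$ and $\int s\sin(s/m)\,ds$ whose integration by parts contributes both sinusoidal and $s$-linear pieces, while the cross products $\sin(s/m)\cos(s/m)$ arising from $x N_\alpha$ and $y B_\alpha$ must cancel cleanly. The key algebraic simplifications that turn the resulting amplitudes into the compact expressions $A=(r^2-h^2)k_2+2rh\tau_2$ and $B=rs(rk_2+h\tau_2)+m^2(c_3 h+c_1)$ all come from the single relation $r^2+h^2=m^2$, so the verification reduces to careful trigonometric collection, with at most a relabeling of integration constants to match the constants $c_1,c_2,c_3$ in the statement.
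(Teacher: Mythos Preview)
Your proposal is correct and follows exactly the same route as the paper: identify $\alpha$ as a circular helix, reduce the system \eqref{k3} to the decoupled second-order equation for $y$ (the paper does not display this reduction, but it is the same computation), solve for $x$ and $y$, and then integrate \eqref{b3}. Your explicit form of the reduced ODE, the Frenet frame of $\alpha$, and the anticipated cancellation of the double-frequency terms are all accurate, and the only discrepancy with the paper's displayed $x(s),y(s)$ is a harmless relabeling of the constants $c_i$, which you already flagged.
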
  
\begin{proof}
The curve $\alpha$ is obtained from \eqref{k} knowing that the curvature and the torsion of $\alpha$ are $k_1$ and $\tau_1$, respectively. 
With this data, we solve \eqref{k3}, obtaining explicit solutions, namely, 
\begin{equation*}
\begin{split}
x(s)&=\frac{r^2 k_2 s}{m^2}+h \left(\frac{r s \tau_2}{m^2}-c_3 \cos \left(\frac{s}{m}\right)+\frac{c_2}{m} \sin \left(\frac{s}{m}\right) + c_3\right)+c_1,\\
y(s)&=r \tau_2-h k_2+c_2 \cos \left(\frac{s}{m}\right)+m c_3 \sin \left(\frac{s}{m}\right),
\end{split}
\end{equation*}
where $c_i\in\r$ are constants of integration. Finally, the curve $\beta$ is obtained by integrating \eqref{b3}.\end{proof}

We particularize this theorem in two interesting cases:
\begin{enumerate}
\item $k_0\in\r$ and $\tau_0\in\varepsilon \r$   (Fig. \ref{fig2}, left).
\item $k_0\in\r$ and $\tau_0\in  \r$   (Fig. \ref{fig2}, right).
\end{enumerate}
In the next two results, $c_i$ stand for integration constants. 
\begin{corol} \label{cor52}
The only dual curves with constant curvature $\kappa=1/r>0$ and constant torsion $\tau=\tau_2\varepsilon$ are
\begin{equation*}
\begin{split}
\alpha(s)&=(r\cos\left(\frac{s}{r}\right),r\sin \left(\frac{s}{r}\right),0)\\
\beta(s)&=\cos \left(\frac{s}{r}\right)\left(0,c_1 r,-c_3 r^2 \right)+r\sin \left(\frac{s}{r}\right)(-c_1 ,0,c_2 ) + \left(0,0, r\tau_2s)\right).
\end{split}
\end{equation*}
\end{corol}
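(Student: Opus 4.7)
The plan is to apply Theorem \ref{t51} with the specific values $k_1 = 1/r$, $k_2 = 0$, $\tau_1 = 0$, and $\tau_2 \in \r$ arbitrary, and then simplify the resulting formulas for $\alpha$ and $\beta$. Since the corollary asserts a concrete parametrization, the whole argument reduces to bookkeeping in the general formulas of Theorem \ref{t51}.

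First I would extract the parameters $r$ and $h$ from the relations \eqref{rh}. The condition $\tau_1 = h/m^2 = 0$ forces $h = 0$, so $m = \sqrt{r^2 + h^2} = r$, and the equation $k_1 = r/m^2 = 1/r$ is automatically consistent with the prescribed real curvature. With these values the helix of Theorem \ref{t51} degenerates to the planar circle
$$\alpha(s) = \left(r\cos\left(\frac{s}{r}\right),\, r\sin\left(\frac{s}{r}\right),\, 0\right),$$
in accordance with the classical fact that a helix of zero torsion is a circle.

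Next I would substitute $h = 0$ and $k_2 = 0$ into the auxiliary quantities from Theorem \ref{t51}, obtaining
$$A = (r^2 - h^2)k_2 + 2rh\tau_2 = 0, \qquad B = rs(rk_2 + h\tau_2) + m^2(c_3 h + c_1) = r^2 c_1.$$
Plugging $h = 0$, $m = r$, $A = 0$, $B = r^2 c_1$ into the general expression for $\beta(s)$, the $x$- and $y$-components of the $s$-linear term vanish because each carries a factor of $h$, while its $z$-component reduces to $(r/m)(r\tau_2 - hk_2) = r\tau_2$; the cosine and sine contributions simplify in the obvious way, yielding precisely the formula stated in the corollary.

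I do not expect any real obstacle: once Theorem \ref{t51} is in hand, the corollary is a direct algebraic specialization. The only conceptual point is that $\tau_1 = 0$ is equivalent, via \eqref{rh}, to the vanishing of the pitch $h$, so the real part is forced to be a circle; the entire effect of the prescribed pure-dual torsion $\tau_2\varepsilon$ is absorbed into $\beta$, which still carries three free constants $c_1, c_2, c_3$, consistent with the failure of Euclidean uniqueness noted in Remark \ref{rem-uniqueness}.
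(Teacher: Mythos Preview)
Your proposal is correct and follows exactly the route the paper intends: the paper presents Corollary \ref{cor52} as a direct particularization of Theorem \ref{t51} without giving a separate proof, and your substitution $h=0$, $m=r$, $A=0$, $B=r^2c_1$ reproduces the stated formulas verbatim.
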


\begin{corol}\label{cor53}
 The only dual curves with constant curvature $\kappa=k_1>0$ and constant torsion $\tau=\tau_1\in\r$ are
\begin{equation*}
\begin{split}
\alpha(s)&=(r\cos\left(\frac{s}{m}\right),r\sin \left(\frac{s}{m}\right),\frac{h}{m}s),\\
\beta(s)&=\cos \left(\frac{s}{m}\right)\left(0,m(c_1+c_3h),-c_3 rm \right)+\sin \left(\frac{s}{m}\right)\left(-m(c_1+c_3h),0,c_2 r\right) \\
&+ hs(c_3 ,-\frac{c_2 }{m},0),
\end{split}
\end{equation*}
where $r$ and $h$ are given by \eqref{rh}. The curve $\beta$ is planar.
\end{corol}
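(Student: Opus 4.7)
The plan is to obtain Corollary \ref{cor53} as a direct specialization of Theorem \ref{t51} with the additional hypotheses $k_2 = 0$ and $\tau_2 = 0$, and then to verify the planarity of the resulting dual part $\beta$ by a short linear-algebra argument.

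First, substitute $k_2 = 0$ and $\tau_2 = 0$ into the formulas of Theorem \ref{t51}. One finds directly $A = (r^2 - h^2)\cdot 0 + 2rh\cdot 0 = 0$, $B = rs(r\cdot 0 + h\cdot 0) + m^2(c_3 h + c_1) = m^2(c_1 + c_3 h)$, and the $z$-component of the linear-in-$s$ term reduces to $\frac{r}{m}(r\tau_2 - h k_2) = 0$. Plugging these simplified values into the parametrization of $\beta(s)$ provided by Theorem \ref{t51} reproduces precisely the expression stated in the corollary; the parametrization of $\alpha$ is unchanged because it depends only on $k_1$ and $\tau_1$.

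To establish that $\beta$ is planar, I would write $\beta(s) = \cos(s/m)\,\vec u + \sin(s/m)\,\vec v + s\,\vec w$ with
\[
\vec u = (0,\, m(c_1 + c_3 h),\, -c_3 r m),\quad \vec v = (-m(c_1 + c_3 h),\, 0,\, c_2 r),\quad \vec w = (c_3 h,\, -c_2 h/m,\, 0).
\]
The oscillatory contribution $\cos(s/m)\vec u + \sin(s/m)\vec v$ always lies in the $2$-plane $V = \mathrm{span}\{\vec u,\vec v\}$ through the origin, so it is enough to show $\vec w \in V$. Assuming $c_1 + c_3 h \neq 0$, the first two components of $\vec w = \lambda \vec u + \mu \vec v$ determine $\lambda = -c_2 h/(m^2(c_1 + c_3 h))$ and $\mu = -c_3 h/(m(c_1 + c_3 h))$, and a direct substitution shows that these values also satisfy the third component (the two resulting $r$-multiples cancel identically). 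Hence $\vec w \in V$ and $\beta(s) \in V$ for every $s$, which is exactly the planarity claim.

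The calculations are largely mechanical; the only point requiring care is the consistency of the three linear equations for $\lambda$ and $\mu$, and the handling of the degenerate case $c_1 + c_3 h = 0$ in which $\vec u$ and $\vec v$ are both parallel to $e_3$. In that subcase one checks separately that $\beta$ takes values in the plane spanned by $e_3$ and $\vec w$, so planarity still holds.
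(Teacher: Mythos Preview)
Your derivation of the parametrization by specializing Theorem \ref{t51} with $k_2=\tau_2=0$ is exactly how the paper obtains it (the paper leaves this step implicit). For the planarity claim, however, you take a different route: you exhibit the plane explicitly by writing $\beta(s)=\cos(s/m)\vec u+\sin(s/m)\vec v+s\vec w$ and checking that $\vec w\in\mathrm{span}\{\vec u,\vec v\}$, together with the degenerate case $c_1+c_3h=0$. The paper instead appeals to the classical differential criterion, computing $\det(\beta',\beta'',\beta''')=0$. Both arguments are short and elementary; yours is a bit more geometric in that it actually identifies the plane containing $\beta$ (namely the linear span of $\vec u$ and $\vec v$ through the origin), whereas the paper's determinant check is the standard torsion-vanishing test and avoids any case analysis. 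Your treatment of the degenerate subcase is correct and needed for completeness of your approach.
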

\begin{proof}
A computation gives $\mbox{det}(\beta',\beta'',\beta''')=0$, which proves that $\beta$ is planar.
\end{proof}

 \begin{figure}[hbtp]
\centering
\includegraphics[width=.2\textwidth]{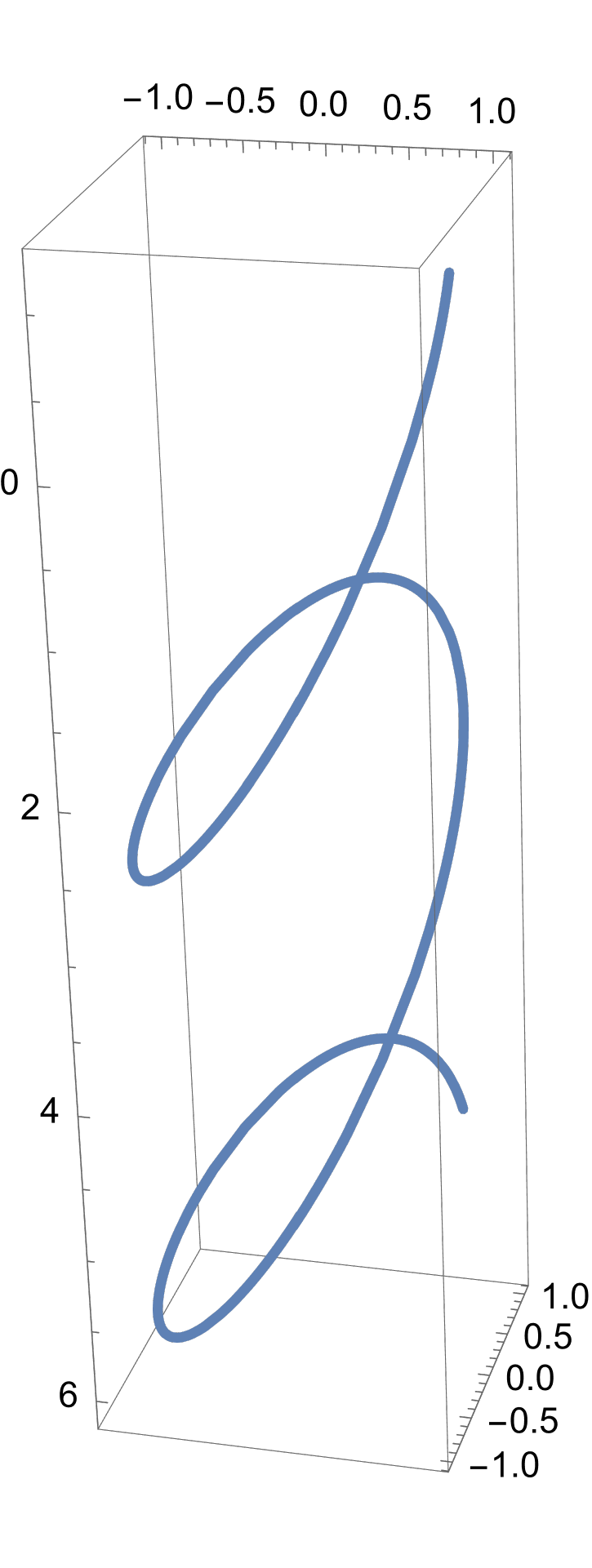}\hspace*{1cm}\includegraphics[width=.5\textwidth]{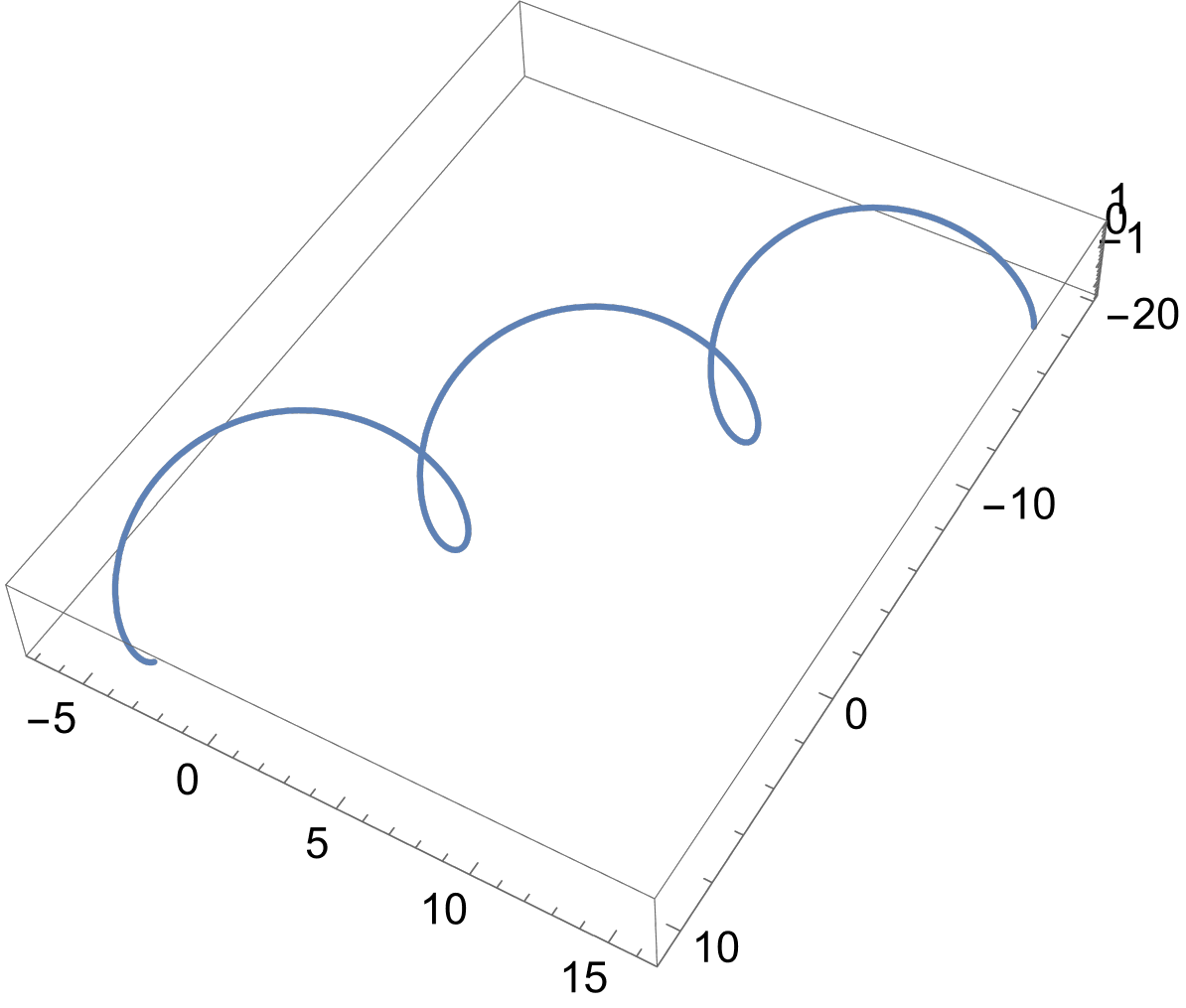}
\caption{Dual parts of curves of Cor. \ref{cor52} (left) and Cor. \ref{cor53} (right).  }\label{fig2}
\end{figure}

\section{Conclusions and outlook } 

This work presents the Frenet theory of curves of real parameter in dual space $\d^3$.   In the Introduction, we have shown     some misleading in the literature concerning to the reparametrization by arc-length in the sense that it is not possible to reparametrize a curve in $\d^3$ of real parameter and to convert it in a curve with dual   parameter. Specially, we have emphasized that the study of curves in $\d^3$ with dual parameter is completely different and, possibly, more  difficult according with its analogy of holomorphic curves.

For dual curves with real parameter,   not every dual curve can be repara\-metrized by arc-length and an orthogonality condition between the velocities of the real and dual parts has to be imposed.  The Frenet theory of dual curves allows to obtain   some extension of  some results  of Euclidean curves. In general, this type of results  will be more difficult than their analogs in Euclidean space because of the existence of the dual part as well as the orthogonality condition with the real part. This is clearly shown in Thm. \ref{t-41} when we assuming that the torsion of the curve is zero everywhere where it was not possible to determine explicitly the dual part of the curve. In contrast, this shows that the theory of dual curves is richer than its analogue of Euclidean spaces. 

The present work also confirms that  a theory of differential geometry of dual curves can be developed in future works. For example, it would be important to have a result of uniqueness     of Thm. \ref{t1} in terms of dual geometry.



 \section*{Acknowledgements} Rafael L\'opez is a member of the IMAG and of the Research Group ``Problemas variacionales en geometr\'{\i}a'',  Junta de Andaluc\'{\i}a (FQM 325). This research has been partially supported by MINECO/MICINN/FEDER grant no. PID2023-150727NB-I00,  and by the ``Mar\'{\i}a de Maeztu'' Excellence Unit IMAG, reference CEX2020-001105-M, funded by MCINN/AEI/10.13039/ 501100011033/ CEX2020-001105-M.

\end{document}